\newcommand{\CO}[2]{ \left\langle #1 , #2 \right\rangle}
\DeclareMathSymbol{\R}{\mathbin}{AMSb}{"52}
\newtheorem{defi}{Definition}[section]
\newtheorem{thm}{Theorem}[section]
\newtheorem{rem}{Remark}[section]
\newtheorem{lem}{Lemma}[section]
\newtheorem{pro}{Proposition}[section]
\journal{Journal of Differential Equations}
\begin{document}
\begin{frontmatter}
\title{Super critical problems with concave and convex nonlinearities in $\mathbb R^N$}
\author[JMD]{J. M. do \'O\corref{mycorrespondingauthor}}
\ead{jmbo@pq.cnpq.br}

\cortext[mycorrespondingauthor]{Corresponding author}
\address[JMD]{Department of Mathematics, Bras\'{\i}lia University, 70910-900, Bras\'ilia, DF, Brazil.}

\author[Pawan Kumar Mishra]{P. K. Mishra}
\ead{pawanmishra31284@gmail.com}

\address[Pawan Kumar Mishra]{Department of Mathematics, Federal University of Para\'{\i}ba, 58051-900, Jo\~ao Pessoa, PB, Brazil.}

\author[A. Moameni]{A. Moameni}
\ead{momeni@math.carleton.ca}

\address[A. Moameni]{School of Mathematics and Statistics, Carleton University, Ottawa, Ontario, Canada.}
\begin{abstract}
In this paper, by utilizing a newly established  variational principle  on convex sets, we provide  an existence and multiplicity result for a class of semilinear elliptic problems defined on the  whole $\mathbb R^N$  with nonlinearities involving sublinear and superlinear terms. We shall impose no growth restriction on the nonlinear term and consequently our problem can be super-critical by means of Sobolev spaces.
\end{abstract}
\begin{keyword}
	Multiplicity, semilinear elliptic problem, super critical, concave-convex, unbounded domain.
\medskip
\MSC[2010]: 35J25\sep 35J60\\
\end{keyword}
\end{frontmatter}
\date{\today}

\section{Introduction}
In this paper, we aim to prove a  multiplicity result for the the class of  super linear problems of the form,
\begin{equation}\label{P}\tag{$P_\lambda$}
\left\{\begin{array}{lll}
&-\Delta u+V(x)u=f(u)+\lambda |u|^{q-2}u \;\;x\in \mathbb R^N,\\
&u\in H^1(\mathbb R^N), \displaystyle \int_{\mathbb R^N}V(x)|u|^2\,\mathrm{d}x<\infty,
\end{array}\right.
\end{equation}
 where $N\geq 2$, $1<q<2$ and $\lambda>0$ is a real parameter. 
We study the above problem for  the following two cases:\\
(1) $N\geq 3$ and $f(u)=|u|^{p-2}u$ for $p>2$.\\
(2) $N=2$ with the the following two assumptions on the function $f,$

$(f1)$ $f:\mathbb R \to \mathbb R$ is an  odd continuous function with $f(t)\geq 0$ for $t\geq 0$ and $f(0)=0;$ 

$(f2)$ There exists $\nu>1$ such that $\displaystyle \lim_{t\rightarrow 0}\frac{f(t)}{t^{\nu}}=0$.\\
We shall also impose the following conditions on the potential $V(x)$,\\
 $\textbf (V1)$ The function $V : \mathbb R^N \rightarrow \mathbb R $ is continuous and
$0<V_0\leq V(x)$ for all $x \in \mathbb R^N$;\\
$\textbf(V2)$ The function $1/V\in L^1(\mathbb R^N)$.

After the eminent work of Ambrosetti-Brezis-Cerami \cite{MR1276168}, the class of problems under consideration has been studied comprehensivly in bounded domains, see  \cite{MR1608057,MR1301008, MR1712564,MR1971261,MR1721723,  MR1983694} and  references therein. Using sub and super solution method, authors in  \cite{MR1276168} have proved the  existence of two positive solutions with the nonlinearity $f_\lambda(x, u)=\lambda u^q+u^p$ satisfying $0<q<1<p$. The gravity of results lies in the fact that there was no control on $p$ from above. Along with many results,  when $p\leq (N+2)/(N-2)$, the existence of infinitly many solution was also establised for suitable choice of parameter $\lambda$. Besides \cite{MR1276168}, we refer interested readers to see \cite{MR1396658, Abbas} also for concave and convex problems  with super critical growth in bounded domains for the existence of infinitly many solutions, where authors have adopted different techniques. In \cite{MR1396658} authors have applied a trucation argument while in \cite{Abbas} authors have adopted a new abstract variational principle discussed in \cite{Mo6} (see also \cite{Mo5}).

 In case of  $\mathbb R^N$, lesser has been explored for the elliptic problem involving concave and convex growth, see \cite{ MR1491844, MR1879863,MR1491612,   MR2185298, MR2557956} with no claim of citing all of them. To begin with, authors in \cite{MR1879863} have attempted to give existence results based on the method of successive approximations with no restrictions on the growth of super linear term. With a subcritical control over superlinear term, authors in \cite{MR2185298} have proved the existence of infinitely many nodal solutions for Schr\"odinger equation with concave-convex nonlinearity. A similar class of problem with sign changing weights has been studied in \cite{MR2557956} for the existence of multiple positive solutions, using the idea of Nehari manifold.

As far as  super critical concave and convex problem on whole $\mathbb R^N$ are concerned, we are only aware of the work in
 \cite{MR1879863} in which the existence of a single solution has been proved. In this paper we shall prove both existence and multiplicity. To be precise, we prove the following results 
 related with the problem \eqref{P}. 
\begin{thm}\label{thm1}
Assume that $1 < q < 2 < p$ and $N\geq 3$. Then  there exists $\Lambda_0 > 0$ such that for each
$\lambda\in (0,\Lambda_0)$ problem \eqref{P} has at least one  positive  solution with a  negative energy.
\end{thm}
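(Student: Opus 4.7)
The natural energy functional for \eqref{P} is
\[
\Phi(u) \;=\; \frac{1}{2}\int_{\mathbb{R}^N}\bigl(|\nabla u|^2 + V(x)u^2\bigr)\,\mathrm{d}x - \frac{1}{p}\int_{\mathbb{R}^N}|u|^p\,\mathrm{d}x - \frac{\lambda}{q}\int_{\mathbb{R}^N}|u|^q\,\mathrm{d}x,
\]
but for $p>2^*=2N/(N-2)$ the $L^p$ term is not even defined on $H^1$, so standard critical-point theory cannot be used directly. My plan is to apply the authors' variational principle on convex sets. Let $E=\{u\in H^1(\mathbb R^N):\int V u^2<\infty\}$ with the inner product $\langle u,v\rangle_V=\int(\nabla u\!\cdot\!\nabla v+Vuv)$; conditions (V1)--(V2) yield a compact embedding $E\hookrightarrow L^s(\mathbb R^N)$ for every $s\in[2,2^*)$. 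For a constant $M>0$ to be chosen, set
\[
K_M \;=\; \bigl\{\,u\in E : 0\le u(x)\le M \text{ a.e.\ in } \mathbb R^N\,\bigr\},
\]
which is a closed convex subset of $E$. On $K_M$ the $L^p$ integral satisfies $\int u^p\le M^{p-2}\int u^2<\infty$, so $\Phi$ is well-defined, $C^1$, and coercive on $K_M$.

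Next I would produce a non-trivial minimizer with negative energy. Weak lower semicontinuity of $\Phi$ on $K_M$ follows from the compact embeddings (the $L^p$ and $L^q$ terms pass to the limit along weakly convergent sequences in $E$, since boundedness in $L^\infty$ and convergence in $L^s$ for $s<2^*$ give convergence in every $L^r$, $1\le r<\infty$). Hence there is $u_0\in K_M$ with $\Phi(u_0)=\inf_{K_M}\Phi$. To see that this infimum is negative, I would plug in a compactly supported bump $\varphi\ge0$, $\varphi\le M$, and observe that
\[
\Phi(t\varphi)=\frac{t^2}{2}\|\varphi\|_V^2-\frac{t^p}{p}\!\int\!\varphi^p-\frac{\lambda t^q}{q}\!\int\!\varphi^q
\]
is strictly negative for small $t>0$ because $q<2<p$; therefore $u_0\not\equiv 0$ and $\Phi(u_0)<0$. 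Minimality on the convex set $K_M$ yields the variational inequality
\[
\int_{\mathbb R^N}\!\bigl(\nabla u_0\cdot\nabla(v-u_0)+V u_0(v-u_0)\bigr)\,\mathrm{d}x\;\ge\;\int_{\mathbb R^N}\!\bigl(u_0^{p-1}+\lambda u_0^{q-1}\bigr)(v-u_0)\,\mathrm{d}x
\]
for every $v\in K_M$.

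To upgrade this inequality to an equality on all of $E$ I would invoke the abstract principle from the paper's framework: let $w\in E$ be the unique solution of the \emph{linear} equation $-\Delta w+V(x)w=u_0^{p-1}+\lambda u_0^{q-1}$ in $\mathbb R^N$; since the right-hand side is bounded (because $u_0\in L^\infty$), standard Stampacchia/Moser-type arguments give $w\in L^\infty(\mathbb R^N)$ with an estimate of the form $\|w\|_\infty\le C\bigl(M^{p-1}+\lambda M^{q-1}\bigr)$. The convex principle then identifies $u_0$ with $w$ provided $w\in K_M$; that is, provided $\|w\|_\infty<M$ (positivity of $w$ is automatic from the maximum principle and positivity of the data, giving the positivity claim on $u_0$).

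The crux of the argument, and the step I expect to be most delicate, is choosing $M$ and $\Lambda_0$ so that the minimizer lies strictly inside $K_M$. Concretely, I would fix $M$ and then use the a priori information $\Phi(u_0)<0$ together with the $\lambda u^q$/$u^p$ structure to derive an $E$-norm bound for $u_0$ that is small uniformly in $\lambda\in(0,\Lambda_0)$; feeding this bound into a bootstrap/$L^\infty$-estimate for $-\Delta u_0+V u_0=u_0^{p-1}+\lambda u_0^{q-1}$ yields $\|u_0\|_\infty<M$ once $\lambda$ is small. Once this is in place, the obstacle $\{u\le M\}$ is inactive, the variational inequality becomes an equality tested against all $v\in E$, and $u_0$ is the required positive solution of \eqref{P} with negative energy.
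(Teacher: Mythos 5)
Your overall strategy --- minimize the energy over a convex set of functions with $0\le u\le M$ in $E_V$, solve the linear comparison problem $-\Delta w+V(x)w=u_0^{p-1}+\lambda u_0^{q-1}$, obtain an $L^\infty$ bound for $w$, and identify $u_0$ with $w$ through the convex variational principle --- is the paper's route (their set $K(r)$, Lemmas \ref{req1}--\ref{3.3} and Theorem \ref{12v2}), and your treatment of the minimization, the negativity of the infimum via $t\varphi$ with $t$ small, and positivity via the maximum principle all match. One small caution: you state the compact embedding only for $s\in[2,2^*)$, but to control the $L^q$ ($q<2$) and supercritical $L^p$ terms on the $L^\infty$-ball you also need $E_V\hookrightarrow L^1(\mathbb R^N)$, which comes from $(V2)$ by Cauchy--Schwarz with $1/V\in L^1$, as in Lemma \ref{embd}.

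The genuine gap is in your final paragraph, the step you yourself call the crux. You propose to prove $\|u_0\|_{L^\infty}<M$ by a bootstrap on the equation $-\Delta u_0+Vu_0=u_0^{p-1}+\lambda u_0^{q-1}$, but that equation is not available at this stage: until the identification $u_0=w$ has been carried out, $u_0$ only satisfies the variational inequality on $K_M$, so the argument is circular. Moreover, ``the obstacle is inactive'' would only address the upper constraint $u\le M$; your $K_M$ also carries the lower constraint $u\ge 0$, which is active wherever $u_0$ vanishes, so even strict inequality $\|u_0\|_{L^\infty}<M$ would not let you test against arbitrary $v\in E_V$. In fact no smallness information about $u_0$ is needed at all. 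The correct closing step is the estimate you already wrote down but then abandoned: since $0\le u_0\le M$, the datum satisfies $\|u_0^{p-1}+\lambda u_0^{q-1}\|_{L^\infty}\le M^{p-1}+\lambda M^{q-1}$, and the $L^\infty$ bound for the linear problem (Lemma \ref{UEST}, proved by a duality argument that gives the clean constant $V_0\|w\|_{L^\infty}\le\|g\|_{L^\infty}$, rather than Stampacchia/Moser) yields $V_0\|w\|_{L^\infty}\le M^{p-1}+\lambda M^{q-1}$. Because $1<q<2<p$, an elementary computation (Lemma \ref{req1}) produces $\Lambda_0>0$ such that for $\lambda\in(0,\Lambda_0)$ there is an interval $[r_1,r_2]$ of radii with $M^{p-1}+\lambda M^{q-1}\le V_0M$ for every $M\in[r_1,r_2]$; choosing such an $M$ gives $\|w\|_{L^\infty}\le M$, while $w\ge 0$ by the maximum principle, hence $w\in K_M$ and Theorem \ref{12v2} forces $u_0=w$, i.e.\ $u_0$ solves \eqref{P}. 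This purely algebraic inequality is exactly where $\Lambda_0$ comes from, and it replaces the circular bootstrap in your sketch.
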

The next theorem is about the multiplicity result for the super-critical case.
\begin{thm}\label{thm2}
Assume that $1 < q < 2 < p$ and $N\geq 3$. Then there exists $\Lambda_0 > 0$ such that for each $\lambda \in (0,\Lambda_0)$ problem \eqref{P} has infinitely many distinct nontrivial solutions with negative energy.
 \end{thm}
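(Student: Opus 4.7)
\textbf{Proof plan for Theorem \ref{thm2}.} Let $E:=\{u\in H^{1}(\mathbb{R}^{N}):\int_{\mathbb{R}^{N}}V(x)|u|^{2}\,dx<\infty\}$ equipped with the inner product giving the norm $\|u\|^{2}=\int_{\mathbb{R}^{N}}(|\nabla u|^{2}+V(x)u^{2})\,dx$. Under $(V1)$--$(V2)$ the standard argument shows that $E$ embeds compactly into $L^{s}(\mathbb{R}^{N})$ for every $s\in[2,2^{\ast})$. The natural energy is
\begin{equation*}
I_{\lambda}(u)=\tfrac{1}{2}\|u\|^{2}-\tfrac{1}{p}\int_{\mathbb{R}^{N}}|u|^{p}\,dx-\tfrac{\lambda}{q}\int_{\mathbb{R}^{N}}|u|^{q}\,dx,
\end{equation*}
which is not well-defined on $E$ when $p>2^{\ast}$. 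The plan is to regain a workable variational setting by restricting $I_{\lambda}$ to the closed convex symmetric subset
\begin{equation*}
K_{\rho}:=\{u\in E:\|u\|_{L^{\infty}(\mathbb{R}^{N})}\leq\rho\},
\end{equation*}
for a suitable $\rho>0$, and to apply the Moameni variational principle on convex sets (the abstract machinery used in \cite{Abbas,Mo5,Mo6}) together with Krasnoselskii genus.

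First, on $K_{\rho}$ the superlinear term becomes uniformly Lipschitz and $I_{\lambda}|_{K_{\rho}}$ is well-defined, coercive (since $q<2$ controls the concave part at infinity while the $p$-term is dominated by the $L^{\infty}$ constraint and the compact embedding) and weakly sequentially lower semicontinuous. I would verify the hypotheses of the abstract principle, which in this concave-convex situation produces a \emph{pointwise} critical point $u\in K_{\rho}$: such $u$ satisfies
\begin{equation*}
-\Delta u+V(x)u=f(u)+\lambda|u|^{q-2}u\quad\text{in the distributional sense},
\end{equation*}
\emph{provided} $\|u\|_{L^{\infty}}<\rho$ strictly, so that $u$ lies in the interior of $K_{\rho}$.

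To get \emph{infinitely many} critical values I would use the Krasnoselskii genus $\gamma$. Since both $f$ and $|u|^{q-2}u$ are odd, the functional $I_{\lambda}$ is even. For each $k\in\mathbb{N}$ pick a $k$-dimensional subspace $F_{k}\subset C_{c}^{\infty}(\mathbb{R}^{N})\cap E$; by equivalence of norms on $F_{k}$ one has $\|u\|_{L^{\infty}}\leq C_{k}\|u\|$ and $\|u\|_{L^{q}}^{q}\geq c_{k}\|u\|^{q}$, so on a small sphere $S_{k}^{r}\subset F_{k}$ (with $r=r_{k}$ small enough that $S_{k}^{r}\subset K_{\rho}$), the concave term forces $I_{\lambda}<0$ uniformly. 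Setting
\begin{equation*}
\Sigma_{k}:=\{A\subset K_{\rho}\setminus\{0\}:A\text{ closed, symmetric},\ \gamma(A)\geq k\},\qquad c_{k}:=\inf_{A\in\Sigma_{k}}\sup_{u\in A}I_{\lambda}(u),
\end{equation*}
this construction yields $-\infty<c_{k}<0$ with $c_{k}\nearrow 0$. A deformation/Ekeland argument compatible with the convex constraint (as in \cite{Abbas}) then identifies each $c_{k}$ as a critical value of $I_{\lambda}|_{K_{\rho}}$, producing a symmetric sequence $\{\pm u_{k}\}$ of distinct critical points on $K_{\rho}$.

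The main obstacle, and the step where the choice of $\Lambda_{0}$ enters, is to promote these constrained critical points to genuine weak solutions of \eqref{P}, i.e.\ to verify $\|u_{k}\|_{L^{\infty}}<\rho$. I would address this in two stages. Since $c_{k}<0$ and $1<q<2$, a direct computation from $I_{\lambda}(u_{k})=c_{k}<0$ and the (constrained) Euler--Lagrange inequality yields an $H^{1}$-bound depending only on $\lambda$ (going to $0$ with $\lambda$). Then a Moser/De~Giorgi bootstrap applied to the equation restricted to $K_{\rho}$, combined with the compact embeddings from $(V2)$, gives a uniform $L^{\infty}$ estimate of the form $\|u_{k}\|_{L^{\infty}}\leq C\lambda^{\alpha}$ with $\alpha>0$; choosing $\Lambda_{0}>0$ so that $C\Lambda_{0}^{\alpha}<\rho$ forces every $u_{k}$ into the interior of $K_{\rho}$ for all $\lambda\in(0,\Lambda_{0})$, completing the proof. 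Distinctness of the $u_{k}$ follows from the standard genus argument: if the critical set at level $c$ had finite genus, the levels $c_{k}$ could not accumulate at $c$.
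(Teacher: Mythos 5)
Your overall skeleton (the convex $L^{\infty}$-ball constraint, evenness, Krasnoselskii genus with small finite-dimensional spheres inside the constraint set giving $-\infty<c_{k}<0$, and a Szulkin-type min--max) is the same as the paper's, which runs it through Theorem \ref{critical theorem} for $\mathcal I_{K}=\Psi_{K}-\Phi$ on $\mathcal V=E_{V}\cap L^{p}(\mathbb R^{N})$. The divergence, and the genuine gap, is in the decisive step of converting constrained critical points into solutions of the supercritical equation. You insist on strict interiority $\|u_{k}\|_{L^{\infty}}<\rho$ and plan to prove it by (a) an $H^{1}$-bound tending to $0$ with $\lambda$, and (b) a ``Moser/De Giorgi bootstrap applied to the equation''. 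Step (b) is circular as written: until the strict bound is established, $u_{k}$ satisfies only the variational inequality \eqref{cpt}, in which admissible comparison functions must stay in $K_{\rho}$; any iteration must therefore be run on the one-sided inequalities obtained from truncation-type variations, and you would still have to produce the quantitative estimate $\|u_{k}\|_{L^{\infty}}\leq C\lambda^{\alpha}$ (the concave term only balances at size $\lambda^{1/(2-q)}$, so this is not a routine bootstrap and is nowhere justified). Step (a) is also not automatic: since $(1+t)u_{k}$ may leave $K_{\rho}$, testing \eqref{cpt} only yields the one-sided inequality $\|u_{k}\|_{E_{V}}^{2}\leq\|u_{k}\|_{L^{p}}^{p}+\lambda\|u_{k}\|_{L^{q}}^{q}$, and to deduce smallness from $c_{k}<0$ you must first absorb the supercritical term via $\|u_{k}\|_{L^{p}}^{p}\leq\rho^{p-2}\|u_{k}\|_{L^{2}}^{2}$, which requires $\rho$ small in terms of $p$ and $V_{0}$ --- a condition you never impose. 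Two further points: $K_{\rho}$ has empty interior in $E_{V}$, so ``interior point'' should be replaced by the strict $L^{\infty}$ inequality plus bounded, compactly supported perturbations; and coercivity plus weak lower semicontinuity do not by themselves give the multiplicity theorem --- one needs the Palais--Smale condition in Szulkin's sense, which the paper verifies explicitly (boundedness in $L^{\infty}$ and $E_{V}$, compact embedding from $(V1)$--$(V2)$, Brezis--Lieb, and a convexity/weak-lsc argument to get strong convergence in $\mathcal V$), while you only gesture at a deformation/Ekeland argument.

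By contrast, the paper never needs strict interiority nor any a priori estimate on the critical points themselves. For each critical point $\overline u$ of $\mathcal I_{K}$ it verifies the pointwise invariance condition: solve the \emph{linear} problem $-\Delta v+V(x)v=D\Phi(\overline u)$ (Lemma \ref{3.3}); the duality estimate $V_{0}\|v\|_{L^{\infty}}\leq\|D\Phi(\overline u)\|_{L^{\infty}}$ (Lemma \ref{UEST}) together with Lemma \ref{3.2} and the elementary inequality $r^{p-1}+\lambda r^{q-1}\leq V_{0}r$ for $r\in[r_{1},r_{2}]$, $\lambda<\Lambda_{0}$ (Lemma \ref{req1} --- this purely algebraic fact is where $\Lambda_{0}$ comes from) shows $v\in K$, and the convexity argument of Theorem \ref{12v2} forces $v=\overline u$, so $\overline u$ solves \eqref{P} even if it touches the boundary of $K$. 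If you wish to keep your interior-point strategy you must supply the missing $L^{\infty}$ theory for the constrained variational inequality; the simpler repair is to replace that step by the invariance argument above.
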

 The above results can be considered as an extension of results in \cite{MR2185298} in critical and super critical case. 
  Now we state the following result in reference to  the two dimensional case.
  \begin{thm}\label{thm3}
Assume that $1 < q < 2$. Then  there exists $\Lambda_{1} > 0$ such that for each
$\lambda\in (0,\Lambda_1)$ problem \eqref{P} has at least one positive  solution with a negative energy.
\end{thm}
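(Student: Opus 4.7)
The plan is to mirror the strategy of Theorems \ref{thm1}--\ref{thm2} in the two-dimensional setting, using $(f2)$ as a replacement for the missing Sobolev critical exponent. Let $E := \{u \in H^1(\mathbb R^2) : \int_{\mathbb R^2} V(x)u^2\,dx < \infty\}$ with inner product $\CO{u}{v} := \int_{\mathbb R^2}(\nabla u \cdot \nabla v + V(x)uv)\,dx$. Conditions $(V1)$--$(V2)$ yield the continuous embedding $E \hookrightarrow L^p(\mathbb R^2)$ for every $p \in [2,\infty)$ and the compact embedding for every $p \in (2,\infty)$, restoring compactness on the unbounded domain. Because $f$ is uncontrolled at infinity, I would work on the convex, weakly closed set
$$K_M := \{u \in E : 0 \leq u(x) \leq M \ \text{a.e.\ on } \mathbb R^2\},$$
for a sufficiently small parameter $M > 0$ to be fixed below. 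On $K_M$ the energy
$$I(u) := \tfrac12\|u\|_E^2 - \int_{\mathbb R^2} F(u)\,dx - \frac{\lambda}{q}\int_{\mathbb R^2} u^q\,dx, \qquad F(t) := \int_0^t f(s)\,ds,$$
is well defined (by $(f2)$ one has $F(u) \leq Cu^2$ on $[0,M]$), weakly lower semicontinuous, and coercive since $q < 2$. The direct method produces a minimizer $u_0 \in K_M$; testing $I$ at $t\varphi$ for a nonnegative compactly supported $\varphi \in K_M$ and small $t>0$ gives $I(t\varphi)<0$, so $I(u_0) < 0$ and in particular $u_0 \not\equiv 0$.

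The crux is to promote $u_0$ from a minimizer of the restricted problem to a critical point of the unrestricted functional; here I would invoke the variational principle on convex sets from \cite{Mo5,Mo6} (cf.\ \cite{Abbas}). Writing $I = \Psi - \Phi$ with $\Psi(u) = \tfrac12\|u\|_E^2$ convex and G\^ateaux differentiable and $\Phi(u) = \int F(u) + \tfrac{\lambda}{q}\int u^q$, that principle guarantees $\Psi'(u_0) = \Phi'(u_0)$ in $E^*$ as soon as the following pointwise compatibility condition holds: the unique $w \in E$ solving the linear problem
$$-\Delta w + V(x)w = f(u_0) + \lambda u_0^{q-1} \quad \text{in } \mathbb R^2$$
belongs to $K_M$. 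Nonnegativity of $w$ is immediate from the maximum principle since the right-hand side is nonnegative, and the comparison $V_0\|w\|_\infty \leq \|f(u_0) + \lambda u_0^{q-1}\|_\infty$ (from $(V1)$) reduces the requirement $\|w\|_\infty \leq M$ to the scalar inequality
$$\varepsilon M^{\nu-1} + \lambda M^{q-2} \leq V_0,$$
where $(f2)$ has been used in the form $|f(t)| \leq \varepsilon t^{\nu}$ on $[0,M]$. The main obstacle is precisely this delicate balance: since $\nu > 1$ one first fixes $M$ small so that $\varepsilon M^{\nu-1} \leq V_0/2$, but this then inflates $M^{q-2}$ (as $q-2 < 0$), so the threshold $\Lambda_1$ must be chosen \emph{a posteriori}, depending on the already fixed $M$, so that $\lambda M^{q-2} \leq V_0/2$ for every $\lambda \in (0,\Lambda_1)$.

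Once the compatibility condition is secured, the variational principle produces a weak $E$-solution $u_0$ of \eqref{P} that is nonnegative, nontrivial, and satisfies $I(u_0) < 0$. The strong maximum principle applied to $-\Delta u_0 + V(x)u_0 = f(u_0) + \lambda u_0^{q-1} \geq 0$ then upgrades $u_0 \geq 0$ to $u_0 > 0$ pointwise, completing the proof.
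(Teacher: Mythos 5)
Your overall strategy (convex set with an $L^\infty$ bound, the estimate $V_0\|w\|_{L^\infty}\leq\|f(u_0)+\lambda u_0^{q-1}\|_{L^\infty}$, the scalar inequality $\varepsilon M^{\nu-1}+\lambda M^{q-2}\leq V_0$ fixing $M$ first and $\Lambda_1$ afterwards, minimization on the convex set, negative energy via $t\mapsto I(t\varphi)$, and the maximum principle) is exactly the paper's, but there is one genuine gap: you never truncate $f$, and without a truncation the functional $\Phi(u)=\int_{\mathbb R^2}F(u)\,\mathrm{d}x+\frac{\lambda}{q}\int_{\mathbb R^2}|u|^q\,\mathrm{d}x$ is not a $C^1$ (indeed not even real-valued) functional on the space $E$. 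Since $(f1)$--$(f2)$ impose no growth restriction at infinity (e.g. $f(t)=t^{3}e^{t^{4}}$ is admissible), for $u\in H^1(\mathbb R^2)$ one only has Trudinger--Moser-type integrability of $e^{\alpha u^2}$, so $\int F(u)\,\mathrm{d}x$ can be $+\infty$ on $E$. The variational framework you invoke requires $\Phi\in C^1(V,\mathbb R)$ on the \emph{whole} reflexive space: the Szulkin notion of critical point tests against all $v\in V$, the proof that a local minimizer of $\Psi_K-\Phi$ is a critical point moves along segments $(1-t)u_0+tv$ which leave $K_M$, and the principle of \cite{Mo5,Mo6} is stated for $\Phi\in C^1(V,\mathbb R)$. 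Saying that $I$ is ``well defined on $K_M$'' does not repair this, and you cannot take $V=E\cap L^\infty$ either, since that space is not reflexive.

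The fix is precisely the device the paper uses in Section 6: replace $f$ by the truncation $g(t)=f(t)$ for $|t|\leq r$ and $g(t)=\frac{f(r)}{r}t$ for $|t|\geq r$, so that $\Upsilon(u)=\int G(u)\,\mathrm{d}x+\frac{\lambda}{q}\int|u|^q\,\mathrm{d}x$ is $C^1$ on $E_V$, run your entire argument for the truncated problem $(T_\lambda)$ on $K(r)$ with $r<\delta_1$ (the $L^\infty$ bound on the right-hand side is then $r^{\nu}+\lambda r^{q-1}$, exactly your scalar inequality with $\varepsilon=1$), and only at the end observe that the solution obtained lies in $K(r_2)$, where $g(\overline u)=f(\overline u)$, so it solves the original problem \eqref{P}. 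Two minor further points: the comparison $V_0\|w\|_{L^\infty}\leq\|g\|_{L^\infty}$ is not immediate from $(V1)$ alone but requires the duality argument of Lemma \ref{UEST}; and the embeddings you quote should include $E_V\hookrightarrow L^\beta(\mathbb R^2)$ for all $\beta\in[1,\infty)$ (compactly), since both the coercivity estimate and the passage to the limit in $\Upsilon$ use the $L^1$-embedding coming from $(V2)$.
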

\begin{rem} We remark that the assumptions $(V1)$ and $(V2)$ do not imply that $V(x)$ is coercive. For example,
 $V (x_1, x_2,...x_N)= 1+x_1^2[\sin^2(2\pi x_1)+x_2^2+x_3^2+....x_N^2]^\alpha$ for $\alpha>N$ satisfies $(V1)-(V2)$ but it is not coercive. 
 \end{rem}
\begin{rem}
A typical example satisfying $(f1)-(f2)$ can be 
$$
f(t)=t^{2\alpha+1}\exp({\beta t^2}), \; \alpha\in \mathbb N\;\; \textrm{such that}\; 2\alpha+1>\nu\;\;\textrm{and}\;\beta\in \mathbb R.
$$ 
\end{rem}
\begin{rem}
As one of the applications of the above problem, we remark that the concave-convex problems arise in the study of anisotropic continuous media. We refer readers to see the introduction in \cite{RAD} for several applications of this kind of problems.
\end{rem}
To prove these results we follow an  idea based on  variational principles on convex sets. One difficulty while dealing with  problems in unbounded domains is to choose a suitable convex set which has a required tolerance with the appropriate solution space so that one can  apply the abstract result   established recently in \cite{Mo6}.

The outline of the paper is as follows. In Section 2,  we shall recall  a new variational principle established in \cite{Mo5, Mo6} that paves a way to do critical point theory on convex sets and yet to obtain critical points with respect to the whole space. In section 3, we give some preliminary results  required for our variational setup. In section 4, we prove the existence result in  Theorem \ref{thm1} while Section 5 is devoted to the existence of infinity many solutions and the proof of Theorem \ref{thm2}.  Finally, we conclude this paper by dealing with the two dimensional case and the proof of Theorem \ref{thm3} in Section 6.
 
 \section{A variational principle}
Let $V$ be a reflexive Banach space, $V^*$ its topological dual
and $K$ be a convex and weakly closed subset of $V$.
Assume that $\Psi : V \rightarrow \mathbb{R} \cup \{+\infty\}$
is a proper, convex, lower semi-continuous function which is
G\^ateaux differentiable on $K$. The G\^ateaux derivative of
$\Psi$ at each point $u \in K$ will be denoted by $D\Psi(u)$.
The restriction of $\Psi$ to $K$ is denoted by $\Psi_K$ and
defined by
\begin{eqnarray}
\Psi_K(u)=\left\{
  \begin{array}{ll}
      \Psi(u), & u \in K, \\
    +\infty, & u \not \in K.
  \end{array}
\right.
\end{eqnarray}
For a given functional $\Phi \in C^{1}(V, \mathbb{R})$ denoted
by $\Phi'(u) \in V^{*}$ its derivative and consider the
functional $\mathcal I_K: V \to (-\infty, +\infty]$ defined by
 \begin{eqnarray*}
 \mathcal I_K(u):= \Psi_K(u)-\Phi(u).
 \end{eqnarray*}
 \\
According to Szulkin \cite{MR837231}, we have the following
definition for critical points of $\mathcal I_K$.

\begin{defi}\label{cpd}
A point $u_{0}\in V$ is said to be a critical point of $\mathcal I_K$ if
$\mathcal I_K(u_{0}) \in \mathbb{R}$ and if it satisfies the following
inequality
\begin{equation}\label{cpt}
\CO{\Phi'(u_{0})}{ u_{0}-v} + \Psi_K(v)- \Psi_K(u_{0}) \geq 0,
\qquad \forall v\in V,
\end{equation}
where $\langle., .\rangle$ is the duality pairing between $V$
and its dual $V^{*}$.
\end{defi}

\begin{pro}\label{minimum}
Each local minimum of $\mathcal I_K$ is necessarily a critical point of $\mathcal I_K$.
\end{pro}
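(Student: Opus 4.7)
The plan is to unfold the definitions and exploit the convex combination $u_0 + t(v-u_0)$ for small $t>0$, just as in Szulkin's original argument.

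First, I would observe that if $u_0$ is a local minimum of $\mathcal I_K$, then $\mathcal I_K(u_0) \in \mathbb R$, which forces $u_0 \in K$ and $\Psi(u_0) < \infty$. Next, to verify the inequality in Definition \ref{cpd}, I would split on $v$: if $v \notin K$ then $\Psi_K(v) = +\infty$ and the inequality is trivial, so the real work is for $v \in K$.

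For such $v$, I would use that $K$ is convex to set $u_t := u_0 + t(v - u_0) = (1-t) u_0 + t v \in K$ for every $t \in [0,1]$. For $t>0$ small enough, $u_t$ lies in the neighborhood where $u_0$ minimizes $\mathcal I_K$, hence
\begin{equation*}
\Psi_K(u_0) - \Phi(u_0) \leq \Psi_K(u_t) - \Phi(u_t).
\end{equation*}
The convexity of $\Psi$ then gives $\Psi_K(u_t) \leq (1-t)\Psi_K(u_0) + t\Psi_K(v)$, so rearranging yields
\begin{equation*}
t\bigl[\Psi_K(u_0) - \Psi_K(v)\bigr] \leq \Phi(u_0) - \Phi(u_t).
\end{equation*}

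Finally, dividing by $t>0$ and letting $t \to 0^+$, the $C^1$ regularity of $\Phi$ on $V$ gives $\bigl(\Phi(u_0) - \Phi(u_t)\bigr)/t \to -\langle \Phi'(u_0), v - u_0\rangle = \langle \Phi'(u_0), u_0 - v\rangle$, producing exactly the critical point inequality \eqref{cpt}. There is no genuine obstacle here; the only subtle point worth stating carefully is that $\Psi$ only needs to be G\^ateaux differentiable on $K$ (which is what justifies the one-sided directional computation along the segment $[u_0,v] \subset K$), while the strong derivative of $\Phi$ on all of $V$ is what lets us pass to the limit to obtain the duality pairing.
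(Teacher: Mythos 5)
Your proof is correct and follows essentially the same route as the paper: evaluate $\mathcal I_K$ at the convex combination $(1-t)u_0+tv$, use local minimality together with convexity of $\Psi_K$, divide by $t$, and let $t\to 0^+$ using the differentiability of $\Phi$ to recover \eqref{cpt}. The only minor quibble is your closing aside: the G\^ateaux differentiability of $\Psi$ on $K$ plays no role here at all — convexity of $\Psi_K$ alone justifies the argument — but this does not affect the validity of the proof.
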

\begin{proof} Let $u$ be a local minimum of $\mathcal I_K$. Using convexity of $\Psi_K$, it follows that for all small $t > 0$,
\begin{align*}
0 \leq \mathcal I_K\left((1 -t)u + tv\right) - \mathcal I_K(u) &= \Phi\left(u + t(v -u)\right) - \Phi(u) + \Psi_K\left((1 -t)u + tv\right) - \Psi_K(u)\\
&\leq \Phi\left(u + t(v -u)\right) - \Phi(u) + t\left(\Psi_K(v) - \Psi_K(u)\right).
\end{align*}
Dividing by $t$ and letting $t\rightarrow 0^+$ we obtain \eqref{cpt}.
\end{proof}

We also recall the notion of point-wise invariance condition from \cite{Mo6}.
\begin{defi}\label{def-invar}
We say that the triple $(\Psi, K, \Phi)$ satisfies the point-wise invariance condition at a 
 point $u_{0}\in V$ if there exists a convex G\^ateaux differentiable function $G: V\to \R$ and a point $v_0 \in K$ such that 
 \[D\Psi(v_0)+DG(v_0)=\Phi'(u_0)+DG(u_0).\]
\end{defi}
We shall now recall the following variational principle
established recently in \cite{Mo6} (see also \cite{Mo5}).
\begin{thm}\label{con2}
Let $V$ be a reflexive Banach space and $K$ be a convex and
weakly closed subset of $V$. Let $\Psi : V \rightarrow
\mathbb{R}\cup \{+\infty\}$ be a convex, lower semi-continuous
function which is G\^ateaux differentiable on $K$ and let $\Phi
\in C^{1}(V, \mathbb{R})$. Assume that  the following two assertions hold:\begin{enumerate}
\item[$(i)$] The functional $\mathcal I_K: V \rightarrow \mathbb{R}
\cup \{+\infty\}$ defined by $\mathcal I_K(u):= \Psi_K(u)-\Phi(u)$ has a
critical point $u_{0}\in V$ as in Definition \ref{cpd} and;
\\
\item[$(ii)$] the triple $(\Psi, K, \Phi)$ satisfies the point-wise invariance condition at the  
 point $u_{0}$.
\end{enumerate} 
Then $u_{0}\in K$ is a solution of the equation 
 \begin{equation} \label{equ1} D \Psi(u) =\Phi'(u).
 \end{equation}
 \end{thm}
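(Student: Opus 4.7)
The plan is to reinterpret the two hypotheses as minimization statements for carefully chosen convex functionals, and then force $u_{0}$ and $v_{0}$ to minimize the same functional, so that a first-order condition at $u_{0}$ yields equation \eqref{equ1}. Since $\mathcal I_K(u_0)$ is finite we must have $u_0\in K$, and for every $v\in K$ the defining inequality \eqref{cpt} rearranges to $\Psi(u_0)-\CO{\Phi'(u_0)}{u_0}\le \Psi(v)-\CO{\Phi'(u_0)}{v}$. Hence $u_0$ is a global minimizer over $K$ of the convex functional $\tilde F(u):=\Psi(u)-\CO{\Phi'(u_0)}{u}$.

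Bringing in the convex function $G$ from the pointwise invariance hypothesis, I would form the augmented functional
\[
\hat F(u):=\Psi(u)+G(u)-\CO{\Phi'(u_0)+DG(u_0)}{u},
\]
which is convex, proper, lower semi-continuous, and G\^ateaux differentiable on $K$. The invariance identity $D\Psi(v_0)+DG(v_0)=\Phi'(u_0)+DG(u_0)$ is exactly $D\hat F(v_0)=0$, so by convexity $v_0$ is a global minimizer of $\hat F$ on $V$; in particular $\hat F(u_0)\ge\hat F(v_0)$. On the other hand, the decomposition
\[
\hat F(u_0)-\hat F(v_0)=\bigl[\tilde F(u_0)-\tilde F(v_0)\bigr]+\bigl[G(u_0)-G(v_0)-\CO{DG(u_0)}{u_0-v_0}\bigr]
\]
exhibits the left-hand side as a sum of two non-positive terms: the first is non-positive because $u_0$ minimizes $\tilde F$ over $K$ and $v_0\in K$, and the second by the tangent-plane inequality for the convex function $G$. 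Both facts ($\hat F(u_0)-\hat F(v_0)\ge 0$ and the same quantity written as a sum of two non-positive terms) can only be reconciled if both brackets vanish, so $\hat F(u_0)=\hat F(v_0)$. Consequently $u_0$ is also a global minimizer of $\hat F$ on $V$, and the G\^ateaux differentiability of $\hat F$ at $u_0$ forces $D\hat F(u_0)=0$, i.e.\ $D\Psi(u_0)=\Phi'(u_0)$.

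The main obstacle is spotting the correct auxiliary functional $\hat F$: the combination $\Psi+G-\CO{\Phi'(u_0)+DG(u_0)}{\cdot}$ is precisely what is needed for the Szulkin criticality of $u_0$ (restricted to $K$) and the global stationarity of $v_0$ (on $V$) to describe the same minimization problem, mediated by the tangent gap for $G$. Once $\hat F$ is identified, the remainder is routine convex analysis, with the only subtlety that the standing assumption of G\^ateaux differentiability of $\Psi$ on $K$ should be read as yielding a bona fide element of $V^*$, so that $D\hat F(u_0)=0$ can be asserted in every direction of $V$ rather than only in directions into $K$.
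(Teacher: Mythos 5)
Your proposal is correct, and it in fact proves the general statement of Theorem \ref{con2} (with an arbitrary convex $G$), which the paper itself does not prove: the paper quotes Theorem \ref{con2} from \cite{Mo6} and only proves the simplified version, Theorem \ref{12v2}, corresponding to $G=0$ with the specific quadratic $\Psi(u)=\frac12\|u\|_{E_V}^2$. The routes are genuinely different. The paper tests the Szulkin inequality at $v=\overline v$, tests the equation satisfied by $\overline v$ against $\eta=\overline u-\overline v$, and adds the gradient (convexity) inequality for $\Psi$ at $\overline v$; because $\Psi$ is a quadratic form, the resulting chain of inequalities collapses to $\frac12\int_{\mathbb R^N}|\nabla(\overline u-\overline v)|^2\,\mathrm{d}x+\frac12\int_{\mathbb R^N}V(x)|\overline u-\overline v|^2\,\mathrm{d}x=0$, forcing $\overline v=\overline u$, after which the equation for $\overline v$ is the equation for $\overline u$. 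Your argument never identifies $u_0$ with $v_0$ (which one could not expect for a general, non-strictly convex $\Psi$): instead you observe that the auxiliary convex functional $\hat F=\Psi+G-\langle\Phi'(u_0)+DG(u_0),\cdot\rangle$ is globally minimized at $v_0$ (invariance identity plus convexity), and, via the decomposition into the $\tilde F$-gap and the tangent gap of $G$, that $\hat F(u_0)=\hat F(v_0)$, so $u_0$ is also a global minimizer; the first-order condition at $u_0$ then yields $D\Psi(u_0)=\Phi'(u_0)$. What this buys is the full generality of the cited principle (arbitrary $G$, no strict convexity of $\Psi$), at the price of the one subtlety you correctly flag: ``G\^ateaux differentiable on $K$'' must be read as providing $D\Psi(u_0)\in V^*$ with finite two-sided directional derivatives in every direction of $V$, which is the intended reading (it is what makes Definition \ref{def-invar} meaningful) and is automatic in the paper's application, where $\Psi$ is finite and smooth on all of $\mathcal V$; also note that the lower semicontinuity of $\hat F$ you mention is never actually used, so nothing is missing there.
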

 
 We shall now adapt the latter theorem to our case. Consider the Banach space $\mathcal V = E_V \cap L^p(\mathbb R^N)$  equipped with the following norm
\begin{align*}
\| u \| := \| u \|_{E_V} + \| u \|_{L^p(\mathbb R^N)},
\end{align*}
where
\[
E_V= \left\{u\in H^1(\mathbb R^N): \displaystyle \int_{\mathbb R^N}V(x)|u|^2\,\mathrm{d}x<\infty \right\}
\]
and
\[
\|u\|_{E_V}=\left(\displaystyle \int_{\mathbb R^N}|\nabla u|^2 \,\mathrm{d}x + \displaystyle\int_{\mathbb R^N}V(x)|u|^2 \,\mathrm{d}x \right)^\frac12.
\]
 Let  $\mathcal I : \mathcal V\rightarrow \mathbb{R}$ be the Euler-Lagrange functional related to \eqref{P}, given as
\begin{equation*}
\mathcal I(u) = \frac{1}{2} \int_{\mathbb R^N} | \nabla u | ^{2} \,\mathrm{d}x+\frac{1}{2} \int_{\mathbb R^N}V(x)|u|^2\,\mathrm{d}x - \frac{1}{p} \int_{\mathbb R^N}| u | ^{p} \,\mathrm{d}x - \frac{\lambda}{q} \int_{\mathbb R^N} | u | ^{q} \,\mathrm{d}x .
\end{equation*}

Define  the function $\Phi : \mathcal V\rightarrow \mathbb{R}$ by
\begin{align*}
\Phi (u) = \frac{1}{p} \int_{\mathbb R^N}| u | ^{p} \,\mathrm{d}x+ \frac{\lambda}{q}\int_{\mathbb R^N} | u | ^{q} \,\mathrm{d}x ,
\end{align*}
Note that $\Phi \in C^1(\mathcal V;\mathbb R).$
 Define
 $\Psi : \mathcal V \rightarrow \mathbb R$ by
\begin{equation*}
\Psi(u) = \frac{1}{2} \int_{\mathbb R^N} | \nabla u | ^{2} \,\mathrm{d}x+\frac12 \int_{\mathbb R^N} V(x)|u|^2\,\mathrm{d}x.
\end{equation*}
Let $K$ be a convex and weakly closed subset of $\mathcal V$. Then the restriction of $\Psi$ over $K$  is denoted by $\Psi_{K}$ and defined as
\begin{eqnarray}\label{psitru}
\Psi_K(u)=\left\{
  \begin{array}{ll}
      \Psi(u), & u \in K, \\
    +\infty, & u \not \in K.
  \end{array}
\right.
\end{eqnarray}
Finally, let us introduce  the functional $\mathcal I_K: \mathcal V \to (-\infty, +\infty]$ defined by
 \begin{eqnarray}\label{resfun0}
 \mathcal I_K(u):= \Psi_K(u)-\Phi(u).
 \end{eqnarray}

For the convenience of the reader, we  shall prove a simplified version of Theorem \ref{con2}, suitable to our problem \eqref{P}.

 \begin{thm}\label{12v2}
Let $\mathcal V=E_V \cap L^p(\mathbb R^N)$ as defined before, and let  and $K$ be a convex and weakly closed subset of $\mathcal V$. If the following two assertions hold:
\begin{enumerate}
\item[$(i)$] The functional $\mathcal I_K: \mathcal V \rightarrow \mathbb{R} \cup \{+\infty\}$ defined in \eqref{resfun0}  has a critical point $\overline u\in \mathcal V$ as in Definition \ref{cpd}, and;

\item[ $(ii)$] there exists $\overline v\in K$ such that $-\Delta \overline v +V(x) \overline v= D \Phi(\overline u)=\overline u |\overline  u|^{p-2}+\lambda \overline u | \overline u| ^{q - 2}$ in the weak sense.
\end{enumerate}
Then $\overline u\in K$ is a solution of the equation
 \begin{equation} \label{equ1}
 -\Delta  u+V(x) u = u |  u|^{p-2}+\lambda u | u| ^{q - 2}.
  \end{equation}
 \end{thm}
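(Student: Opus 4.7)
The plan is to exploit the two hypotheses together with the strict convexity of $\Psi$ to force the coincidence $\overline u=\overline v$, from which the conclusion is immediate. First I would observe that, by the definition of critical point (Definition \ref{cpd}), we must have $\mathcal I_K(\overline u)\in \mathbb R$, which requires $\Psi_K(\overline u)<+\infty$ and hence $\overline u\in K$. So both $\overline u$ and $\overline v$ are in $K$, where $\Psi_K$ coincides with $\Psi$.

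Next I would plug the test point $v=\overline v\in K$ into the critical point inequality \eqref{cpt}; since $\Psi_K(\overline v)=\Psi(\overline v)$ and $\Psi_K(\overline u)=\Psi(\overline u)$, this reads
\[
\langle \Phi'(\overline u),\overline u-\overline v\rangle+\Psi(\overline v)-\Psi(\overline u)\geq 0.
\]
The assumption (ii) says precisely that $\langle D\Psi(\overline v),w\rangle=\langle \Phi'(\overline u),w\rangle$ for every $w\in\mathcal V$; applied to $w=\overline u-\overline v$ this substitutes $\langle \Phi'(\overline u),\overline u-\overline v\rangle$ with $\langle D\Psi(\overline v),\overline u-\overline v\rangle$, yielding
\[
\Psi(\overline u)-\Psi(\overline v)-\langle D\Psi(\overline v),\overline u-\overline v\rangle\leq 0.
\]

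Because $\Psi(u)=\tfrac12\|u\|_{E_V}^2$ is a continuous quadratic form, a direct expansion gives the identity
\[
\Psi(\overline u)-\Psi(\overline v)-\langle D\Psi(\overline v),\overline u-\overline v\rangle=\tfrac12\|\overline u-\overline v\|_{E_V}^2,
\]
so the previous inequality forces $\|\overline u-\overline v\|_{E_V}=0$. Since the $E_V$-norm dominates the $H^1$-norm, we conclude $\overline u=\overline v$ (as elements of $\mathcal V$). Therefore $\overline u\in K$ and, by hypothesis (ii) applied to $\overline v=\overline u$, the function $\overline u$ is a weak solution of $-\Delta u+V(x)u=u|u|^{p-2}+\lambda u|u|^{q-2}$, which is \eqref{equ1}.

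The argument is almost entirely algebraic once the two hypotheses are in place; the only conceptual point is the use of strict convexity of the quadratic form $\Psi$ to upgrade the single variational inequality into the exact identification $\overline u=\overline v$. This is also the main ``obstacle'' one could imagine: if $\Psi$ were merely convex but not strictly so, we would only obtain that $\overline v$ is a minimizer of a certain auxiliary functional on the line segment $[\overline u,\overline v]$, not equality. The quadratic nature of $\Psi$ in this setting avoids this difficulty cleanly.
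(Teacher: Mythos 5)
Your proposal is correct and follows essentially the same route as the paper: test the critical-point inequality with $v=\overline v$, use the weak equation for $\overline v$ against $\overline u-\overline v$, and exploit the quadratic (strictly convex) structure of $\Psi$ to force $\overline u=\overline v$, whence (ii) gives the equation. The paper phrases the last step as two opposite convexity inequalities yielding an equality equivalent to $\tfrac12\|\overline u-\overline v\|_{E_V}^2=0$, which is exactly your quadratic expansion identity.
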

\textbf{Proof.} Since $\overline u$ is a critical point of $\mathcal I_K(u)=\Psi_K(u)-\Phi(u),$
it follows from Definition \ref{cpd} that
\begin{equation}\label{ineq0}
\Psi_K(v)-\Psi_K(\overline u)\geq \langle D \Phi(\overline u),v-\overline u\rangle,\quad \forall v\in V,
\end{equation}
where $\langle D \Phi(\overline u),v-\overline u\rangle=\int_{\mathbb R^N} D \Phi(\overline u)(v-\overline u)\, \,\mathrm{d}x.$ Which leads to
\begin{equation}\label{thmi}
\frac{1}{2} \int_{\mathbb R^N} (| \nabla v | ^{2}-| \nabla \overline u | ^{2}) \,\mathrm{d}x+\frac{1}{2}\int_{\mathbb R^N} V(x)(|v|^2-|\overline u|^2)\,\mathrm{d}x\geq \langle D \Phi(\overline u),v-\overline u\rangle ,\quad \forall v\in V.
\end{equation}
It follows from the second assumption in the theorem that there exists $\overline v\in K$ such that
\begin{equation}\label{thmii}
 \int_{\mathbb R^N}\nabla \overline v. \nabla \eta \,\mathrm{d}x + \int_{\mathbb R^N} V(x)\overline v \eta  \,\mathrm{d}x = \int_{\mathbb R^N} D\Phi(\overline u)\eta \,\mathrm{d}x, \quad \forall \eta \in V..
\end{equation}
Now putting $\eta=\overline u-\overline v$ in \eqref{thmii}, we get
\begin{equation}\label{iicons}
 \int_{\mathbb R^N}\nabla \overline v. \nabla(\overline u-\overline v)  \,\mathrm{d}x+ \int_{\mathbb R^N} V(x)\overline v (\overline u-\overline v) \,\mathrm{d}x = \int_{\mathbb R^N} D\Phi(\overline u)(\overline u-\overline v) \,\mathrm{d}x.
\end{equation}
Now substituting $v=\overline v$ in \eqref{thmi} and using \eqref{iicons}, we get
\begin{equation}\label{mixinii}
\frac{1}{2} \int_{\mathbb R^N} (| \nabla \overline v | ^{2}-| \nabla \overline u | ^{2}) \,\mathrm{d}x+\frac{1}{2}\int_{\mathbb R^N} V(x)(|\overline v|^2-|\overline u|^2) \,\mathrm{d}x\geq \int_{\mathbb R^N}\nabla \overline v. \nabla(\overline u-\overline v)  \,\mathrm{d}x+ \int_{\mathbb R^N} V(x)\overline v (\overline u-\overline v) \,\mathrm{d}x.
\end{equation}
On the other hand, by the convexity of $\Psi$, we get
\begin{equation}\label{lst}
\frac{1}{2} \int_{\mathbb R^N} (| \nabla \overline u | ^{2}-| \nabla \overline v | ^{2}) \,\mathrm{d}x+\frac{1}{2}\int_{\mathbb R^N} V(x)(|\overline u|^2-|\overline v|^2)\,\mathrm{d}x\geq \int_{\mathbb R^N}\nabla \overline v. \nabla(\overline v-\overline u)  \,\mathrm{d}x+ \int_{\mathbb R^N} V(x)\overline v (\overline v-\overline u) \,\mathrm{d}x.
\end{equation}
Combining \eqref{mixinii} and \eqref{lst}, we get
\[
\frac{1}{2} \int_{\mathbb R^N} (| \nabla \overline v | ^{2}-| \nabla \overline u | ^{2}) \,\mathrm{d}x+\frac{1}{2}\int_{\mathbb R^N} V(x)(|\overline v|^2-|\overline u|^2)\,\mathrm{d}x= \int_{\mathbb R^N}\nabla \overline v. \nabla(\overline u-\overline v)  \,\mathrm{d}x+ \int_{\mathbb R^N} V(x)\overline v (\overline u-\overline v) \,\mathrm{d}x.
\]
Indeed the last equation is equivalent to
\[
\frac{1}{2} \int_{\mathbb R^N} | \nabla \overline v-\nabla \overline u | ^{2} \,\mathrm{d}x+\frac{1}{2}\int_{\mathbb R^N} V(x)|\overline v-\overline u|^2\,\mathrm{d}x=0.
\]
Since $V(x)>0$, we get $\overline v=\overline u$. Using this observation in \eqref{thmii}, we get the required result . This completes the proof. \hfill $\square$\\

We remark that the condition $ii)$ in Theorem \ref{12v2} indeed shows that the triple $(\Psi, K, \Phi)$ satisfies the point-wise invariance condition  at $u_0$ given in Definition \ref{def-invar} . In fact, it corresponds to the case where $G=0$.  This is why Theorem \ref{12v2} is a very particular case of the general Theorem \ref{con2}. 
 \section{Preliminary results}
 In this section we prove some preliminary results required throughout the paper.  
We have the following result  for the compact inclusion of the space $E_V$ into suitable Lebesgue spaces.
\begin{lem}\label{embd}
Under the assumption $(V1)-(V2)$ and $N\geq 2$ the embedding $E_V\hookrightarrow L^\beta(\mathbb R^N)$ is compact for $\beta \in [1, 2^*)$ where $2^*=2N/(N-2)$ for $N>2$ and $2^*=\infty$ for $N=2.$
\end{lem}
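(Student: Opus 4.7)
The plan is to prove the continuous embedding first and then upgrade to compactness. The decisive hypothesis is $1/V\in L^1(\mathbb R^N)$, which will let us control the "mass at infinity" for bounded sequences in $E_V$.

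First I would establish the continuous embedding $E_V\hookrightarrow L^1(\mathbb R^N)$ by a one-line Cauchy--Schwarz trick: writing $|u|=|u|V^{1/2}\cdot V^{-1/2}$ and using $(V1)$--$(V2)$,
\[
\int_{\mathbb R^N}|u|\,\mathrm{d}x\leq\Bigl(\int_{\mathbb R^N}V(x)|u|^2\,\mathrm{d}x\Bigr)^{1/2}\Bigl(\int_{\mathbb R^N}V^{-1}\,\mathrm{d}x\Bigr)^{1/2}\leq C\|u\|_{E_V}.
\]
On the other hand, $E_V\subset H^1(\mathbb R^N)$ continuously, so the standard Sobolev embedding gives $E_V\hookrightarrow L^{2^*}(\mathbb R^N)$ when $N\geq 3$, and $E_V\hookrightarrow L^r(\mathbb R^N)$ for every $r\in[2,\infty)$ when $N=2$. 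Interpolation between $L^1$ and $L^{2^*}$ (or between $L^1$ and arbitrarily high $L^r$ when $N=2$) then yields the continuous embedding $E_V\hookrightarrow L^\beta(\mathbb R^N)$ for all $\beta\in[1,2^*)$.

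For the compactness, I would take a bounded sequence $\{u_n\}\subset E_V$ and extract a subsequence with $u_n\rightharpoonup u$ weakly in $E_V$ (hence in $H^1(\mathbb R^N)$). The key step is to prove $u_n\to u$ strongly in $L^1(\mathbb R^N)$ by the following splitting: fix $R>0$ and write
\[
\int_{\mathbb R^N}|u_n-u|\,\mathrm{d}x=\int_{B_R}|u_n-u|\,\mathrm{d}x+\int_{\mathbb R^N\setminus B_R}|u_n-u|\,\mathrm{d}x.
\]
The first integral tends to $0$ as $n\to\infty$ by the Rellich--Kondrachov theorem applied on $B_R$. For the tail, apply Cauchy--Schwarz exactly as in Step 1, restricted to $\mathbb R^N\setminus B_R$:
\[
\int_{\mathbb R^N\setminus B_R}|u_n-u|\,\mathrm{d}x\leq\Bigl(\int_{\mathbb R^N}V|u_n-u|^2\,\mathrm{d}x\Bigr)^{1/2}\Bigl(\int_{\mathbb R^N\setminus B_R}V^{-1}\,\mathrm{d}x\Bigr)^{1/2}.
\]
The first factor is uniformly bounded in $n$ by the $E_V$-boundedness of $\{u_n\}$ (and weak lower semicontinuity for $u$); the second tends to $0$ as $R\to\infty$ because $V^{-1}\in L^1(\mathbb R^N)$. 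Given $\varepsilon>0$, choose $R$ large to make the tail $<\varepsilon$, then $n$ large to make the local piece $<\varepsilon$.

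Finally I would upgrade to $L^\beta$-convergence for $\beta\in(1,2^*)$ by interpolation: pick some $r\in(\beta,2^*)$ and $\theta\in(0,1)$ with $1/\beta=\theta+(1-\theta)/r$, and write
\[
\|u_n-u\|_{L^\beta}\leq\|u_n-u\|_{L^1}^{\theta}\,\|u_n-u\|_{L^r}^{1-\theta}.
\]
The first factor tends to zero by the $L^1$ step, while the second is bounded thanks to the continuous embedding $E_V\hookrightarrow L^r(\mathbb R^N)$. The only step requiring real thought is the tail control, and the whole argument hinges on pairing $V$ with $V^{-1}$ via Cauchy--Schwarz; once that is observed, the rest is routine.
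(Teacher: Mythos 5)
Your proposal is correct and follows essentially the same route as the paper: the Cauchy--Schwarz pairing of $V$ with $V^{-1}$ for the $L^1$ embedding, the ball/tail splitting with local compactness on $B_R$ and the tail controlled by $\int_{|x|>R}V^{-1}\,\mathrm{d}x$, and the final interpolation between $L^1$ and a higher Lebesgue exponent below $2^*$. No gaps worth noting.
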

\begin{proof}
By $(V1)$ the embedding $E_V\hookrightarrow H^1(\mathbb R^N)$ is continuous. Thus, $E_V\hookrightarrow L^\beta(\mathbb R^N)$  is
continuous for $\beta \in [2, 2^*)$. Moreover, if $u\in E_V$, we have
\[
\displaystyle \int_{\mathbb R^N} |u|\,\mathrm{d}x \leq \left(\displaystyle \int_{\mathbb R^N} (V(x))^{-1}\,\mathrm{d}x \right)^\frac12\|u\|_{E_V}.
\]
Therefore, by interpolation $E_V\hookrightarrow L^\beta(\mathbb R^N)$  is continuous for $\beta \in [1, 2^*)$. Now, let $\{u_n\}$ be
a bounded sequence in $E_V$, i.e. $\|u_n\|_{E_V}\leq C$ for some $C>0$. Hence, up to a subsequence, $u_n\rightharpoonup u_0$ weakly in $E_V$. Given $\epsilon > 0$,
we consider $R > 0$ such that
\[
\displaystyle \int_{|x|>R}(V(x))^{-1}\,\mathrm{d}x\leq \left[\frac{\epsilon}{2(C+\|u_0\|_{E_V})}\right]^2.
\]
This implies that
\[
\displaystyle \int_{|x|>R}|u_n-u_0|\,\mathrm{d}x\leq \displaystyle \left(\int_{|x|>R}(V(x))^{-1}\,\mathrm{d}x\right)^\frac12\|u_n-u_0\|_{E_V}
\]
and since $E_V\hookrightarrow L^1(B_R)$ is compact, if follows that there exists $n_0$ such that for all
$n>n_0$,
\[
 \displaystyle\int_{B_R}|u_n-u_0|\,\mathrm{d}x\leq \frac{\epsilon}{2}.
\]
Thus, $u_n\rightarrow u_0$ in $L^1(\mathbb R^N)$. Next, if $\beta \in [1, 2^*)$ then choose $\beta< \beta_0<2^*$  and use interpolation inequality, for
some $0 < \alpha \leq 1 $ to  get
\[
\|u_n-u_0\|_{L^\beta(\mathbb R^N)}\leq \|u_n-u_0\|_{L^1(\mathbb R^N)}^\alpha\|u_n-u_0\|_{L^{\beta_0}(\mathbb R^N)}^{1-\alpha} \rightarrow 0
\]
and the proof is complete.
\end{proof}
\begin{lem}\label{UEST}
Let  $(V1)-(V2)$ be satisfied and  let $g \in L^{\infty}(\mathbb R^N)$ for $N \geq 2.$  If $u \in E_V$ is a weak solution of the problem,
\begin{equation}\label{linear}-\Delta u+V(x)u=g(x),\end{equation}
then  $V_0 \|u\|_{L^{\infty}(\mathbb R^N)}\leq \|g\|_{L^{\infty}(\mathbb R^N)}.$ 
\end{lem}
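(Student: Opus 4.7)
My plan is to use the standard $L^\infty$-truncation (De Giorgi / Stampacchia) argument adapted to the full space $\mathbb{R}^N$. Set
\[
M := \frac{\|g\|_{L^{\infty}(\mathbb R^N)}}{V_0}
\]
and aim to show that $u(x) \leq M$ and $u(x) \geq -M$ a.e., which gives the claimed bound.

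For the upper bound, I would take the truncation $\varphi := (u-M)^{+} \in E_V$ as a test function. First I would check admissibility: $|\nabla \varphi|^2 = |\nabla u|^2 \chi_{\{u>M\}}$ and $\int_{\mathbb R^N} V(x)\varphi^2 \,\mathrm{d}x = \int_{\{u>M\}} V(x)(u-M)^2 \,\mathrm{d}x \leq \int_{\mathbb R^N} V(x)u^2\,\mathrm{d}x < \infty$, so $\varphi \in E_V$. Plugging $\varphi$ into \eqref{linear} and decomposing $u(u-M) = (u-M)^2 + M(u-M)$ on the set $\{u>M\}$ yields
\[
\int_{\mathbb R^N} |\nabla \varphi|^2 \,\mathrm{d}x + \int_{\mathbb R^N} V(x)\varphi^2 \,\mathrm{d}x + M\int_{\{u>M\}} V(x)(u-M)\,\mathrm{d}x = \int_{\{u>M\}} g(x)(u-M)\,\mathrm{d}x.
\]
Using $g(x) \leq \|g\|_{L^{\infty}} = V_0 M \leq V(x) M$ pointwise on $\{u>M\}$, the right-hand side is bounded by $M\int_{\{u>M\}} V(x)(u-M)\,\mathrm{d}x$, which absorbs into the left. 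This leaves
\[
\int_{\mathbb R^N} |\nabla \varphi|^2 \,\mathrm{d}x + \int_{\mathbb R^N} V(x)\varphi^2 \,\mathrm{d}x \leq 0,
\]
and since $V(x) \geq V_0 > 0$ by $(V1)$, we conclude $\varphi \equiv 0$, i.e.\ $u \leq M$ a.e.

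For the lower bound, I would test with $\psi := -(u+M)^{-} = \min(u+M, 0)$ (admissibility is identical). The same expansion together with $g(x) \geq -V_0 M \geq -V(x)M$ on the set $\{u<-M\}$ leads by the same absorption argument to $\int_{\mathbb R^N} V(x)|\psi|^2 \,\mathrm{d}x \leq 0$, and hence $u \geq -M$ a.e. Combining both inequalities gives $\|u\|_{L^{\infty}(\mathbb R^N)} \leq M$, which is the stated conclusion.

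The only delicate point is verifying that the Stampacchia-type cutoffs $(u-M)^{+}$ and $(u+M)^{-}$ are genuinely in $E_V$ so the weak formulation may be applied; this works here thanks to the trivial inequalities $(u-M)^{2}\chi_{\{u>M\}} \leq u^{2}$ and $(u+M)^{2}\chi_{\{u<-M\}}\leq u^{2}$ on the relevant sets, which make the $V$-weighted integrals finite. Beyond that, there is no boundary to worry about on $\mathbb R^N$, and no growth hypothesis on $u$ is needed beyond $u \in E_V$, since the $V_0$ in $(V1)$ provides the absorbing coercivity that replaces the role of a Poincaré constant in the bounded-domain version.
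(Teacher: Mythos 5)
Your proof is correct, but it follows a genuinely different route from the paper. You run a direct Stampacchia/De Giorgi truncation: with $M=\|g\|_{L^\infty}/V_0$ you test the equation with $(u-M)^+$ and $-(u+M)^-$, absorb the right-hand side using $|g|\le V_0M\le V(x)M$, and conclude from the coercivity $V\ge V_0$ that both truncations vanish; your verification that these truncations belong to $E_V$ (via $(u\mp M)^2\chi\le u^2$ on the relevant sets) is exactly the point that needs checking, and it is fine. The paper instead argues by duality: for arbitrary $h\in C_c^\infty(\mathbb R^N)$ it takes a weak solution $v\in E_V$ of the auxiliary problem $-\Delta v+V(x)v=h$, proves the $L^1$-estimate $V_0\|v\|_{L^1}\le\|h\|_{L^1}$ by testing with $\eta(v)=v/\sqrt{\epsilon+v^2}$ and letting $\epsilon\to0^+$ (Fatou), and then pairs $u$ with $h$ through the two weak formulations to get $\bigl|\int u h\,\mathrm{d}x\bigr|\le \frac{1}{V_0}\|h\|_{L^1}\|g\|_{L^\infty}$, which yields the $L^\infty$ bound on $u$ by $L^1$--$L^\infty$ duality. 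Your argument is more elementary and self-contained: it needs no auxiliary solvability for every datum $h$ (which the paper implicitly relies on, justified later by minimization) and no duality step, only the lower bound $V\ge V_0$. The paper's approach, on the other hand, produces the $L^1$ a priori estimate for the operator as a by-product and transfers regularity of $u$ from smooth data by symmetry of $-\Delta+V$. One small point common to both: the weak formulation must be understood with test functions in $E_V$ (not merely $C_c^\infty$), which is the notion used throughout the paper and is what makes your choice of test functions, and the paper's choice $\eta(v)$, admissible; if one starts from $C_c^\infty$ test functions this is recovered by a routine density argument.
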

\begin{proof} Take $h \in C_c^\infty(\mathbb R^N)$ and assume that $v \in E_V$ is a weak solution of $-\Delta v+V(x)v=h(x).$ We  show that $V_0 \|v\|_{L^1{(\mathbb R^N)}}\leq  \|h\|_{L^1(\mathbb R^N)}$. Let $\eta \in C^1(\mathbb R,\mathbb R)$ be such that $\eta(0)=0,$ $\eta' \geq 0$, $|\eta| \leq 1,$ and $\eta' \in L^{\infty}(\mathbb R).$  It can be easily deduced that $\eta(v) \in E_V.$ Since $v$ is a weak solution of $-\Delta v+V(x)v=h(x)$, it follows that
\[\int_{\mathbb R^N}\eta'(v)|\nabla v|^2\, \,\mathrm{d}x+\int_{\mathbb R^N} V(x)v \eta (v)\, \,\mathrm{d}x=\int_{\mathbb R^N} h(x) \eta (v) \,\mathrm{d}x\]
and therefore,
\begin{equation}\label{linear1}V_0 \int_{\mathbb R^N} v \eta (v)\,\mathrm{d}x\leq \int_{\mathbb R^N} h(x) \eta (v)\,\mathrm{d}x\leq \|h\|_{L^1(\mathbb R^N)}.\end{equation}

Given $\epsilon >0,$ let $\eta (v)=v/\sqrt{\epsilon +v^2}.$  It follows from (\ref{linear1}) that
\[V_0 \int_{\mathbb R^N} \frac {v^2}{\sqrt{\epsilon +v^2}} \,\mathrm{d}x \leq \|h\|_{L^1(\mathbb R^N)}.\]
Letting $\epsilon \to 0^+$ and applying Fatou's Lemma imply that $V_0 \|v\|_{L^1{\mathbb R^N}}\leq  \|h\|_{L^1(\mathbb R^N)}.$\\
On the other hand $u\in E_V$ is a weak solution of (\ref{linear}). Thus, 
\[\int_{\mathbb R^N} u h \,\mathrm{d}x=\int_{\mathbb R^N} \nabla v.\nabla u\,\mathrm{d}x+V(x)uv\,\mathrm{d}x=\int_{\mathbb R^N} gv \,\mathrm{d}x.\]
Therefore,
\[\Big |\int_{\mathbb R^N} u h \,\mathrm{d}x\Big | \leq \|v\|_{L^1(\mathbb R^N)}\|g\|_{L^{\infty}(\mathbb R^N)} \leq \frac{1}{V_0}\|h\|_{L^1(\mathbb R^N)}\|g\|_{L^{\infty}(\mathbb R^N)}.\] 
Since  $h \in C_c^\infty(\mathbb R^N)$ is arbitrary, we obtain  that  $V_0 \|u\|_{L^{\infty}(\mathbb R^N)}\leq \|g\|_{L^{\infty}(\mathbb R^N)}.$ 
\end{proof}
Recall that $\mathcal V = E_V \cap L^p(\mathbb R^N)$. 
To prove Theorem \ref{thm1}, keeping in mind the continuous inclusion of Lemma \ref{embd}, we  have the following construction of the closed set $K\subset E_V$,
 \begin{equation}\label{cwcK0}
 K=K(r) :=\big \{ u \in  \mathcal V \cap L^\infty(\mathbb R^N): \| u \| _{L^{\infty} (\mathbb R^N)} \leq r \big  \},
 \end{equation}
  for some $r > 0$ to be determined later. 

 In the next Lemma we show that the set $K$ is weakly closed.
 \begin{lem}
 Let $r>0$ be fixed then the set $K(r)$ defined in \eqref{cwcK0} is weakly closed in $\mathcal V.$
 \end{lem}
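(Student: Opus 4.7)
The plan is to take a sequence $\{u_n\} \subset K(r)$ with $u_n \rightharpoonup u$ weakly in $\mathcal V$ and show that $u \in K(r)$. Since $K(r)$ is clearly nonempty and convex (as the intersection of $\mathcal V$ with the convex $L^\infty$-ball of radius $r$), one could alternatively invoke Mazur's theorem after proving strong closedness, but the direct argument via Lemma \ref{embd} is more transparent, so I would carry it out that way.

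First I would observe that the inclusion $\mathcal V \hookrightarrow E_V$ is continuous by construction of the norm $\|\cdot\|$, so weak convergence $u_n \rightharpoonup u$ in $\mathcal V$ implies weak convergence $u_n \rightharpoonup u$ in $E_V$. In particular, $\{u_n\}$ is bounded in $E_V$. Then Lemma \ref{embd} provides the compact embedding $E_V \hookrightarrow L^2(\mathbb R^N)$ (for instance), which yields strong convergence $u_n \to u$ in $L^2(\mathbb R^N)$. Extracting a subsequence, I may also assume $u_n(x) \to u(x)$ pointwise almost everywhere in $\mathbb R^N$.

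Next, since each $u_n$ satisfies $|u_n(x)| \le r$ for almost every $x\in\mathbb R^N$, passing to the pointwise limit on a set of full measure gives $|u(x)| \le r$ a.e., hence $u \in L^\infty(\mathbb R^N)$ with $\|u\|_{L^\infty(\mathbb R^N)} \le r$. Combined with $u \in \mathcal V$ (which follows automatically from weak convergence in $\mathcal V$), this shows $u \in K(r)$, completing the proof.

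I do not anticipate a real obstacle here: the only subtlety is making sure the weak convergence in $\mathcal V$ (defined via the sum norm) actually entitles us to use the compact embedding of $E_V$, which is immediate from the continuity of $\mathcal V \hookrightarrow E_V$. Once that is noted, Lemma \ref{embd} plus pointwise a.e.\ passage to the limit closes the argument.
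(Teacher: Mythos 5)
Your proof is correct and follows essentially the same route as the paper: pass from weak convergence in $\mathcal V$ to pointwise a.e.\ convergence of a subsequence and conclude $\|u\|_{L^\infty(\mathbb R^N)}\leq r$ from $|u_n|\leq r$. The only difference is that you explicitly justify the pointwise convergence via the continuity of $\mathcal V\hookrightarrow E_V$ and the compact embedding of Lemma \ref{embd}, a step the paper's proof asserts without detail.
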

 \begin{proof}
 Take a sequence  $\{u_n\}$ in $K(r)$ such that $u_n\rightharpoonup u$ weakly in $\mathcal V$. Since $\mathcal V$ is reflexive, $u\in \mathcal V$. Now it remains to show that $u\in L^{\infty}(\mathbb R^N)$ and $\|u\|_{L^\infty(\mathbb R^N)}\leq r$. Since $u_n\rightharpoonup u $ in $\mathcal V$, it converges point wise up 
to a subsequence, i.e.,  $u_n(x) \rightarrow u(x)$ almost everywhere in $\mathbb R^N$. This implies that $|u(x)| =\lim_{n \to \infty }|u_n(x)|\leq r$ for a.e. $x \in \mathbb R^N$.  Thus, $\| u \| _{L^{\infty} (\mathbb R^N)} \leq r$. 
 \end{proof}

\section{Proof of Theorem \ref{thm1}}
We begin with the following elementary result which can be deduced by a straightforward  computation.
\begin{lem} \label{req1}
Let $1 < q < 2 < p$ and $V_0>0$ is same as defined in $(f1)$.  Then there exists  $\Lambda_0 > 0$ with the following properties.
\begin{enumerate}
\item For each $\lambda \in (0,\Lambda_0)$, there exist positive numbers $r_{1},  r_{2} \in \mathbb{R}$ with $r_{1} < r_{2}$ such that  $r \in  [r_{1}, r_{2}]$ if and only if   $ r^{p -1} + \lambda  r^{q -1}\leq V_0 r.$
\item  For $\lambda=\Lambda_0,$ there exists one and only one  $r>0$ such that $ r^{p -1} + \lambda  r^{q -1}= V_0 r.$
\item For $\lambda> \Lambda_0,$ there is no $r>0$ such that $ r^{p -1} + \lambda  r^{q -1}= V_0 r.$
\end{enumerate}
\end{lem}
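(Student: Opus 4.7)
The plan is to reduce the statement to a one-variable calculus exercise about a strictly convex auxiliary function and then read off the three alternatives from its geometry. Dividing the inequality $r^{p-1}+\lambda r^{q-1}\leq V_0 r$ by $r>0$, it is equivalent to $\psi(r)\leq V_0$, where
\[
\psi(r):=r^{p-2}+\lambda r^{q-2},\qquad r>0.
\]
Since $p-2>0$ and $q-2<0$, both summands are strictly convex on $(0,\infty)$ (in fact $\psi''(r)=(p-2)(p-3)r^{p-4}+\lambda(q-2)(q-3)r^{q-4}>0$ when $p\ne 3$ and $q\ne 3$, and in general $\psi$ is strictly convex in $\log r$ as a sum of two exponentials in $\log r$). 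Moreover $\psi(r)\to+\infty$ both as $r\to 0^+$ (due to the $r^{q-2}$ term) and as $r\to+\infty$ (due to the $r^{p-2}$ term), so $\psi$ attains a unique global minimum at some $r^\ast=r^\ast(\lambda)>0$.

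First I would locate $r^\ast$ by solving $\psi'(r)=0$, which gives
\[
(p-2)r^{p-3}=\lambda(2-q)r^{q-3},\qquad\text{so}\qquad r^\ast=\left(\frac{\lambda(2-q)}{p-2}\right)^{\frac{1}{p-q}}.
\]
A direct substitution then yields $\psi(r^\ast)=C_{p,q}\,\lambda^{\frac{p-2}{p-q}}$, where $C_{p,q}>0$ depends only on $p$ and $q$. Thus $\lambda\mapsto\psi(r^\ast(\lambda))$ is a continuous, strictly increasing bijection of $(0,\infty)$ onto $(0,\infty)$, and I would define $\Lambda_0$ as the unique positive number for which
\[
\psi\bigl(r^\ast(\Lambda_0)\bigr)=C_{p,q}\,\Lambda_0^{\frac{p-2}{p-q}}=V_0.
\]

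The three claims then follow directly from the shape of $\psi$. For $\lambda\in(0,\Lambda_0)$ we have $\psi(r^\ast)<V_0$, and by strict convexity together with $\psi(0^+)=\psi(+\infty)=+\infty$ the sub-level set $\{\psi\leq V_0\}$ is a bounded closed interval $[r_1,r_2]$ with $0<r_1<r^\ast<r_2$, giving item (1). For $\lambda=\Lambda_0$ the value $V_0$ is exactly the minimum of $\psi$, so $\{\psi\leq V_0\}$ reduces to the single point $r^\ast(\Lambda_0)$, giving (2). For $\lambda>\Lambda_0$ we have $\psi(r)\geq\psi(r^\ast)>V_0$ for all $r>0$, so the equation (and indeed the inequality) has no solution, giving (3). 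There is no serious obstacle in this argument; the only mild subtlety is ensuring strict convexity of $\psi$ uniformly in the parameter range $1<q<2<p$, which I would handle by the change of variable $s=\log r$ that converts $\psi$ into a strictly convex sum of two exponentials $e^{(p-2)s}+\lambda e^{(q-2)s}$.
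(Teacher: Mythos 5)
The paper itself offers no proof of this lemma (it is dismissed as a ``straightforward computation''), so there is no argument of the authors to diverge from; your write-up is a correct way to supply the missing computation. One caveat: your opening claim that both summands of $\psi(r)=r^{p-2}+\lambda r^{q-2}$ are strictly convex in $r$ is false when $2<p<3$ (there $r^{p-2}$ is strictly concave, and in fact $\psi''(r)<0$ for large $r$, so the parenthetical second-derivative formula does not save it). Your own fallback does: after $s=\log r$ the function becomes $e^{(p-2)s}+\lambda e^{(q-2)s}$, strictly convex and coercive on $\mathbb{R}$, and since $r\mapsto \log r$ is an increasing homeomorphism of $(0,\infty)$ onto $\mathbb{R}$, the sublevel sets in $r$ are still compact intervals and the minimizer is unique. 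Alternatively, convexity can be bypassed entirely: $\psi'(r)=r^{q-3}\bigl((p-2)r^{p-q}-\lambda(2-q)\bigr)$ changes sign exactly once, from negative to positive, at $r^{\ast}=\bigl(\lambda(2-q)/(p-2)\bigr)^{1/(p-q)}$, which yields the unimodal shape directly. The remaining steps --- the scaling identity $\psi(r^{\ast})=C_{p,q}\,\lambda^{(p-2)/(p-q)}$, the definition of $\Lambda_0$ by $C_{p,q}\,\Lambda_0^{(p-2)/(p-q)}=V_0$, and the reading of the three alternatives off the level $V_0$ relative to the minimum --- are all correct (with the implicit understanding, consistent with the statement, that the equivalence in item (1) is over $r>0$).
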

The following Lemma is helpful in verifying the   condition $(ii)$ in Theorem \ref{12v2}.
\begin{lem} \label{3.2}
Assume that $1 < q < 2 < p$. Then
\begin{align*}
\Vert D\Phi (u) \Vert_{L^{\infty}(\mathbb R^N)} \leq r^{p -1} + \mu
r^{q -1}, \qquad \forall u \in K(r).
\end{align*} 
\end{lem}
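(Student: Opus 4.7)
The plan is to exploit the explicit form of $D\Phi$ and then reduce everything to a pointwise estimate, using only that membership in $K(r)$ gives a uniform pointwise control on $u$.

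First, I would differentiate $\Phi(u) = \frac{1}{p}\int_{\mathbb R^N}|u|^p\,\mathrm{d}x + \frac{\lambda}{q}\int_{\mathbb R^N}|u|^q\,\mathrm{d}x$ to produce the pointwise representative
\[
D\Phi(u)(x) = |u(x)|^{p-2}u(x) + \lambda\,|u(x)|^{q-2}u(x) \quad \text{a.e. in } \mathbb R^N.
\]
This is routine since for $u \in K(r)$ one has $|u(x)|\leq r$ a.e., so the integrand $|u|^{q}$ is locally Lipschitz (away from zero) in the relevant range and both terms produce honest $L^\infty$ functions.

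Next, a direct application of the triangle inequality gives, for a.e.\ $x\in\mathbb R^N$,
\[
|D\Phi(u)(x)| \leq |u(x)|^{p-1} + \lambda\,|u(x)|^{q-1}.
\]
Since $u\in K(r)$ implies $|u(x)|\leq r$ almost everywhere, and since both exponents $p-1$ and $q-1$ are positive (as $p>2$ and $q>1$), the functions $t\mapsto t^{p-1}$ and $t\mapsto t^{q-1}$ are nondecreasing on $[0,\infty)$. Therefore $|u(x)|^{p-1}\leq r^{p-1}$ and $|u(x)|^{q-1}\leq r^{q-1}$ pointwise a.e.

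Combining these bounds and taking the essential supremum over $x\in\mathbb R^N$ yields the claim $\|D\Phi(u)\|_{L^\infty(\mathbb R^N)} \leq r^{p-1} + \lambda\,r^{q-1}$ (reading $\mu$ in the statement as $\lambda$). There is no serious obstacle here; the lemma is a clean pointwise estimate, and its significance lies solely in the fact that the right-hand side is exactly the quantity that, via Lemma \ref{req1}, can be made no larger than $V_0 r$ for $\lambda\in(0,\Lambda_0)$ and $r\in[r_1,r_2]$. This is what will later let us combine it with Lemma \ref{UEST} to verify the point-wise invariance condition $(ii)$ in Theorem \ref{12v2}.
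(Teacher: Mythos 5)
Your argument is correct and coincides with the paper's proof: both compute $D\Phi(u)=|u|^{p-2}u+\lambda|u|^{q-2}u$, apply the triangle inequality, and use $\|u\|_{L^\infty(\mathbb R^N)}\leq r$ together with monotonicity of $t\mapsto t^{p-1}$ and $t\mapsto t^{q-1}$ to obtain the bound $r^{p-1}+\lambda r^{q-1}$ (the $\mu$ in the statement is indeed just $\lambda$). No issues.
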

\begin{proof}
By definition of $D\Phi (u)$ we have
\begin{align*}% \label{6m} 
\Vert D\Phi (u) \Vert_{L^{\infty}(\mathbb R^N)} &= \big \Vert u \vert
u\vert ^{p - 2} +\lambda u \vert u \vert ^{q - 2}
\big \Vert_{L^{\infty}(\mathbb R^N)} \\
& \leq \big \Vert u \vert u\vert ^{p - 2}\big \Vert_{L^{\infty}(\mathbb R^N)} +\lambda \big \Vert u \vert u \vert ^{q - 2} \big \Vert_{L^{\infty}(\mathbb R^N)}.
\end{align*} 
Therefore,
\begin{align*}% \label{6m} 
\Vert D\Phi (u) \Vert_{L^{\infty}(\mathbb R^N)} \leq \Vert u
\Vert_{L^{\infty}(\mathbb R^N)}^{p -1} + \lambda \Vert u
\Vert_{L^{\infty}(\mathbb R^N)}^{q -1}.
\end{align*} 
It follows from $u \in K(r)$ that
\begin{align*}% \label{6m} 
\Vert D\Phi (u) \Vert_{L^{\infty}(\mathbb R^N)} \leq r^{p -1} + \lambda
r^{q -1} ,
\end{align*} 
as desired.
\end{proof}

We are now in the position to state the following result addressing condition $(ii)$ in Theorem \ref{12v2}.
\begin{lem} \label{3.3}
Let  $1 < q < 2 < p$. Assume that $\Lambda_0> 0$ is given in Lemma \ref{req1} and  $\lambda \in (0, \Lambda_0).$ Let $r_1, r_2$ be given in part (1) of  Lemma \ref{req1}. Then   for each $r \in  [r_{1}, r_{2}]$  and   each $\overline{u}\in K(r)$ there exists $v \in K(r)$ such that
 \[-\Delta v+V(x)v=\overline{u} | \overline{u}| ^{p - 2} +\lambda \overline{u} | \overline{u} | ^{q - 2} .\]
\end{lem}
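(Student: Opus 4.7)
The plan is to produce $v$ as the unique weak solution in $E_V$ of the linear equation $-\Delta v + V(x)v = g$ with $g := \bar u|\bar u|^{p-2} + \lambda\bar u|\bar u|^{q-2}$, and then to verify $v\in K(r)$ by combining the $L^\infty$-bound of Lemma \ref{UEST} with the calibration of the interval $[r_1,r_2]$ provided by Lemma \ref{req1}.

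First I would check that $g$ is an admissible right-hand side. Since $\bar u\in K(r)$, the computation performed in Lemma \ref{3.2} gives $\|g\|_{L^\infty(\mathbb R^N)}\le r^{p-1}+\lambda r^{q-1}$. Combined with the continuous embedding $E_V\hookrightarrow L^1(\mathbb R^N)$ from Lemma \ref{embd}, the linear form $w\mapsto \int_{\mathbb R^N} g w\,\mathrm dx$ is continuous on $E_V$ since
\[
\Big|\int_{\mathbb R^N} g w\,\mathrm dx\Big|\le \|g\|_{L^\infty(\mathbb R^N)}\,\|w\|_{L^1(\mathbb R^N)}\le C\,\|g\|_{L^\infty(\mathbb R^N)}\,\|w\|_{E_V}.
\]
I would then obtain $v$ as the minimizer of $F(w):=\tfrac12\|w\|_{E_V}^2-\int_{\mathbb R^N} gw\,\mathrm dx$ on the reflexive Hilbert space $E_V$. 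The functional $F$ is strictly convex, coercive (thanks to the estimate above), and weakly lower semi-continuous, so the direct method yields a unique minimizer $v\in E_V$ that is a weak solution of $-\Delta v+V(x)v=g$.

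Next, I would invoke Lemma \ref{UEST} with this $v$ and the $L^\infty$ right-hand side $g$ to obtain
\[
V_0\,\|v\|_{L^\infty(\mathbb R^N)}\le \|g\|_{L^\infty(\mathbb R^N)}\le r^{p-1}+\lambda r^{q-1}.
\]
Since $\lambda\in(0,\Lambda_0)$ and $r\in[r_1,r_2]$, Lemma \ref{req1}(1) gives $r^{p-1}+\lambda r^{q-1}\le V_0 r$, hence $\|v\|_{L^\infty(\mathbb R^N)}\le r$. Finally, $v\in E_V\hookrightarrow L^1$ together with $v\in L^\infty$ yields $v\in L^p(\mathbb R^N)$ by interpolation, so $v\in\mathcal V\cap L^\infty$ with $\|v\|_\infty\le r$; that is, $v\in K(r)$.

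The heart of the argument --- and the only place where the hypotheses $\lambda\in(0,\Lambda_0)$ and $r\in[r_1,r_2]$ are actually used --- is the propagation of the $L^\infty$-bound via Lemma \ref{UEST}: it is precisely Lemma \ref{req1} that converts the pointwise estimate $\|v\|_\infty\le (r^{p-1}+\lambda r^{q-1})/V_0$ into $\|v\|_\infty\le r$, thereby rendering the convex set $K(r)$ invariant under the linear map $\bar u\mapsto v$. Existence of the weak solution is otherwise a routine application of the direct method, and membership of $v$ in $\mathcal V$ follows immediately from the embeddings once the $L^\infty$-bound has been secured.
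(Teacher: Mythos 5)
Your proposal is correct and follows essentially the same route as the paper: minimize the quadratic functional $w\mapsto \tfrac12\|w\|_{E_V}^2-\int_{\mathbb R^N} gw\,\mathrm{d}x$ on $E_V$ (well-defined by the embedding of Lemma \ref{embd}), then propagate the $L^\infty$-bound through Lemma \ref{UEST}, Lemma \ref{3.2} and the calibration $r^{p-1}+\lambda r^{q-1}\le V_0 r$ of Lemma \ref{req1}. Your closing interpolation remark securing $v\in L^p(\mathbb R^N)$, hence $v\in\mathcal V$, is a small detail the paper leaves implicit, but otherwise the two arguments coincide.
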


\begin{proof}
Let $g(x)=\overline{u} | \overline{u}| ^{p - 2} +\lambda \overline{u} | \overline{u} | ^{q - 2}.$
Since $\overline{u} \in K(r),$ it follows that $g \in L^\infty(\mathbb R^N).$ Since the embedding $E_V \hookrightarrow L^1(\mathbb R^N)$ is compact, the functional
\[Q(u)= \frac{1}{2} \int_{\mathbb R^N} | \nabla u | ^{2} \,\mathrm{d}x+\frac{1}{2} \int_{\mathbb R^N}V(x)|u|^2\,\mathrm{d}x-\int_{\mathbb R^N} g(x) u\,\mathrm{d}x,\]
is well-defined on $E_V$ and admits its minimum at some 
   $v \in E_V$ which indeed satisfies
\begin{align} \label{4m}
-\Delta v+V(x)v  = g(x)=D\Phi (\overline{u}),
\end{align}
in a weak sense. Since the right hand side is an element in $L^\infty(\mathbb R^N),$ it follows from Lemma \ref{UEST} that $V_0\|v\|_{L^\infty(\mathbb R^N)} \leq \|g\|_{L^\infty(\mathbb R^N)}$.
This together with  Lemma \ref{3.2} yield that
\begin{align*}% \label{6m}
V_0\|v\|_{L^\infty(\mathbb R^N)}   \leq  r^{p -1} + \lambda  r^{q -1}.
\end{align*}
By Lemma \ref{req1},  for each $r \in  [r_{1}, r_{2}]$ we have that  $ r^{p -1} + \lambda  r^{q -1}\leq  V_0 r.$ Therefore,
\[\|v\|_{L^\infty(\mathbb R^N)} \leq r,\]
as desired.
\end{proof}

\textbf{Proof of Theorem \ref{thm1}.}
 Let $\Lambda_0$ be as in Lemma \ref{3.3} and $\lambda \in (0, \Lambda_0).$ Also, let  $r_1$ and $r_2$ be as in Lemma \ref{3.3} and define
\[K:=\big \{u \in K(r_2); \, \, \, u(x) \geq 0 \text{ a.e. } x \in \mathbb R^N\big \}.\]
 Now we proceed with the proof in the following  steps.\\
{\it Step 1.}  We show that there exists  $\overline{u} \in K$ such that $\mathcal I_K(\overline{u}) = \inf _{u \in E_V}\mathcal I_K(u)$. Then by Proposition \ref{minimum}, we conclude that  $\overline{u}$  is a critical point of $\mathcal I_K.$\\
Set $\eta := \inf _{u \in E_V}\mathcal I_K(u)$. So by definition of $\Psi_K$ for every $u \notin K$, we have $\mathcal I_K(u) = +\infty$ and therefore $\eta = \inf _{u \in K}\mathcal I_K(u)$. It follows that for every $u \in K$
\begin{align*}
\Phi (u) & = \dfrac{1}{p} \int_{\mathbb R^N} | u | ^{p} \,\mathrm{d}x+ \dfrac{\lambda}{q} \int_{\mathbb R^N} | u | ^{q} \,\mathrm{d}x \\
 &\leq \dfrac{r_2^{p-1}}{p} \int_{\mathbb R^N} | u |  \,\mathrm{d}x+ \dfrac{\lambda r_2^{q-1}}{q} \int_{\mathbb R^N} | u | \,\mathrm{d}x\\
&\leq  c_{1} \| u\|_{E_{V}} ,
\end{align*}
where we have used the embedding $E_{V} \hookrightarrow L^1(\mathbb R^N)$ due to Lemma \ref{embd}.  Thus, for $u \in K$ we have that 
 \begin{eqnarray}\label{coer}
  \mathcal I_K(u):= \Psi_K(u)-\Phi(u) \geq \frac{1}{2}\| u\|^2_{E_{V}} -c_{1} \| u\|_{E_{V}}, 
 \end{eqnarray}
from which we obtain that  $\eta> -\infty$.
Now, suppose that $\{u_{n}\}$ is a sequence in $K$ such that $\mathcal I_K(u_{n})\rightarrow \eta$. It follows from (\ref{coer}) and the definition of set $K$ that  the sequence $\{u_{n}\}$ is bounded in $E_{V}\cap L^\infty(\mathbb R^N)$. Using standard results in Sobolev spaces, after passing to a subsequence if necessary, there exists $\overline{u} \in E_V$ such that $u_n \rightharpoonup \overline{u} $ weakly in $E_{V}$. Moreover $u_n(x)\rightarrow \overline u(x)$ in $\mathbb R^N$ pointwise almost everywhere which implies $\overline u\in L^\infty(\mathbb R^N)$ with $\|\overline u\|_{L^\infty(\mathbb R^N)}\leq r_2$. As a consequence  $\overline u\in K$.  We now show that $\Phi(u_n) \to \Phi(\overline{u}).$  Indeed, we have that 
\[\dfrac{1}{p}  | u_n | ^{p} + \dfrac{\lambda}{q}  | u_n | ^{q}\leq \dfrac{r_2^{p-1}}{p}  | u_n |  + \dfrac{\lambda r_2^{q-1}}{q}  | u_n |.\]

Therefore, by the strong convergence $u_n \to \overline u$ in $L^1(\mathbb R^N)$ because of the compact embedding  $E_{V} \hookrightarrow L^1(\mathbb R^N)$ as in Lemma \ref{embd}, and the dominated convergence theorem we obtain that $\Phi(u_n) \to \Phi(\overline{u}).$

Therefore, $\mathcal I_K(\overline{u})\leq \liminf_{n \to \infty }\mathcal I_K(u_{n})$. So, $\mathcal I_K(\overline{u}) = \eta=\inf _{u \in \mathcal V}\mathcal I_K(u),$ and the proof of {\it Step 1} is complete.\\

{\it Step 2.}
 In this step we show that there exists $v\in K$ such that  $-\Delta v +V(x)v=\overline{u} | \overline{u}| ^{p - 2} +\lambda \overline{u} | \overline{u} | ^{q - 2} .$ By  Lemma \ref{3.3} together with the fact that $\overline{u} \in K(r_2)$ we obtain that $v\in K(r_2).$ To show that $v\in K,$ we shall need to verify that $v$ is non-negative almost every where.  But, this is a simple consequence
of the maximum principle and the fact  that $-\Delta v+V(x)v=\overline{u} | \overline{u}| ^{p - 2} +\lambda \overline{u} | \overline{u} | ^{q - 2} \geq 0.$ \\

It now follows from Theorem \ref{12v2} together with
{\it Step 1} and
{\it Step 2} that $\overline{u}$ is a solution of  (\ref{P}).  To complete the proof we shall show that $\overline{u}$ is non-trivial by proving that
 $\mathcal I_{K}(\overline{u}) = \inf_{u \in K} \mathcal I_{K}(u) < 0$.
Take $e \in K$. For $t\in [0,1],$ we have that  $te \in K$ and therefore
\begin{align*}
\mathcal I_{K}(te) & = \frac{1}{2} \int_{\mathbb R^N} | \nabla te | ^{2} \,\mathrm{d}x+\frac{1}{2} \int_{\mathbb R^N} V(x)|te | ^{2} \,\mathrm{d}x - \frac{1}{p} \int_{\mathbb R^N} | te | ^{p} \,\mathrm{d}x - \frac{\lambda}{q} \int_{\mathbb R^N} | te | ^{q} \,\mathrm{d}x
\\
& = t^{q}\left( \frac{ t^{2 - q}}{2} \int_{\mathbb R^N} | \nabla e | ^{2} \,\mathrm{d}x +\frac{t^{2-q}}{2} \int_{\mathbb R^N} V(x)|e | ^{2} \,\mathrm{d}x-\frac{ t^{p - q}}{p} \int_{\mathbb R^N} | e | ^{p} \,\mathrm{d}x - \frac{\lambda}{q} \int_{\mathbb R^N} | e | ^{q} \,\mathrm{d}x\right).
\end{align*}
Since $1 < q < 2 < p$, $\mathcal I_{K}(te)$ is negative for $t$ sufficiently small. Thus,  we can conclude that $\mathcal I_{K}(\overline{u}) < 0$. Thus, $\overline{u}$ is a non-trivial and non-negative solution of (\ref{P}). Finally,  it follows from the strong maximum principle that $\overline u>0$.
\hfill$\square$\\
\section{Proof of Theorem \ref{thm2}}
In this section, we prove Theorem \ref{thm2}.  According to \cite{MR837231}, say  that $\mathcal I_K$ satisfies the  compactness condition of Palais-Smale type provided,
\\
{(PS):}$\quad$ \emph{If $ \{u_{n}\}$ is a sequence such that $\mathcal I_K(u_{n})\rightarrow c \in \mathbb{R}$ and
\begin{align} \label{ps}
\langle D\Phi (u_{n}), u_{n} - v \rangle + \Psi_K (v) - \Psi_K(u_{n}) \geq -\epsilon_{n} \| v - u_{n}\|\qquad \forall v \in \mathcal V,
\end{align}
where $\epsilon_{n}\rightarrow 0$, then $\{u_{n}\}$  possesses a convergent subsequence}.

We recall  an important result  about critical points of even functions of the type $\mathcal I_K$. We shall begin with some preliminaries. Let $\Sigma$ be the of all symmetric subsets of $\mathcal V\setminus \{0\}$ which are closed in $\mathcal V$. A nonempty set $A \in \Sigma$ is said to have \emph{genus k} (denoted $\gamma(A) = k$) if $k$ is the smallest integer with the property that there exists an odd continuous mapping $h : A\rightarrow \mathbb{R}^{k} \setminus\{0\}$. If such an integer does not exist, $\gamma(A) = \infty$. For the empty set $\emptyset$ we define $\gamma(\emptyset) = 0$.
\begin{pro}\label{genus}
Let $A \in \Sigma$. If $A$ is a homeomorphic to $S^{k -1}$ by an odd homeomorphism, then $\gamma(A) = k$.
\end{pro}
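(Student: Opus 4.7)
The plan is to prove the equality by establishing the two inequalities $\gamma(A)\le k$ and $\gamma(A)\ge k$ separately, with the first being elementary and the second relying on the Borsuk--Ulam theorem.

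For the upper bound, let $\phi\colon A\to S^{k-1}$ be the odd homeomorphism given by hypothesis. Since $S^{k-1}\subset \mathbb R^k\setminus\{0\}$, the map $\phi$ itself (viewed with codomain $\mathbb R^k\setminus\{0\}$) is an odd continuous map from $A$ into $\mathbb R^k\setminus\{0\}$. By the very definition of genus, this yields $\gamma(A)\le k$.

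For the lower bound, I would argue by contradiction. Suppose $\gamma(A)\le k-1$, so that there exists an odd continuous map $h\colon A\to \mathbb R^{k-1}\setminus\{0\}$. Composing with $\phi^{-1}\colon S^{k-1}\to A$, which is odd and continuous, we would obtain an odd continuous map
\[
H:=h\circ \phi^{-1}\colon S^{k-1}\longrightarrow \mathbb R^{k-1}\setminus\{0\}.
\]
This is precisely the situation ruled out by the Borsuk--Ulam theorem, which asserts that any continuous odd map $S^{k-1}\to \mathbb R^{k-1}$ must vanish at some point. The contradiction forces $\gamma(A)\ge k$.

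Combining the two inequalities gives $\gamma(A)=k$. The main content of the argument is really the Borsuk--Ulam theorem; once that topological fact is invoked, both inequalities are essentially bookkeeping with odd maps. The only mild subtlety is ensuring that the composition $h\circ \phi^{-1}$ is genuinely odd, which follows because $\phi$ (and hence $\phi^{-1}$) is odd by hypothesis and $h$ is odd by the definition of genus, so that $H(-x)=h(\phi^{-1}(-x))=h(-\phi^{-1}(x))=-h(\phi^{-1}(x))=-H(x)$.
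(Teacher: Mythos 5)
Your argument is correct: the odd homeomorphism itself gives $\gamma(A)\le k$, and the Borsuk--Ulam theorem rules out an odd continuous map $S^{k-1}\to\mathbb R^{k-1}\setminus\{0\}$, giving $\gamma(A)\ge k$. The paper does not prove this proposition itself but defers to Rabinowitz's lecture notes, and your proof is exactly the standard argument given there, so there is nothing to add.
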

Proof and a more detailed discussion of the notion of genus can be found in \cite{Rabinowitz 1} and \cite{Rabinowitz 2}.
\\
Let $\Theta$ be the collection of all nonempty closed and bounded subsets of $\mathcal V$. In $\Theta$ we introduce the Hausdorff metric distance (\cite{MR0090795}, $\S15, VII$), given by
\begin{align*}
\mathrm {dist}\;(A, B) =\max\left\{\sup_{a\in A}\mathrm{d}(a, B),\; \sup_{b\in B}\mathrm{d}(b, A)\right\}.
\end{align*}
The space $(\Theta, \mathrm{dist})$ is complete (\cite{MR0090795}, $\S29, IV$). Denote by $\Gamma$ the sub-collection of $\Theta$ consisting of all nonempty compact symmetric subsets of $\mathcal V$ and let
\begin{align}\label{Gam}
\Gamma_{j} = cl\{A\in \Gamma : 0\notin A, \gamma(A)\geq j \}
\end{align}
($cl$ is the closure in $\Gamma$). It is easy to verify that $\Gamma$ is closed in $\Theta$, so $(\Gamma, \mathrm{dist})$ and $(\Gamma_{j}, \mathrm{dist})$ are complete metric spaces.
The following Theorem is proved in \cite{MR837231}.
\begin{thm}\label{critical theorem} Let $\Psi : V \rightarrow
\mathbb{R}\cup \{+\infty\}$ be a convex, lower semi-continuous
function, and let $\Phi
\in C^{1}(V, \mathbb{R})$ and  define $\mathcal I =\Psi-\Phi.$
Suppose that $\mathcal I : \mathcal V\rightarrow (-\infty, +\infty]$ satisfies {(PS)}, $\mathcal I(0) = 0$ and $\Phi$, $\Psi$ are even. Define
\begin{align*}
c_{j} = \inf_{A\in \Gamma_{j}} \sup_{u \in A} \mathcal I(u).
\end{align*}
If $-\infty < c_{j} < 0$ for $j = 1, ... ,k$, then $\mathcal I$ has at least $k$ distinct pairs of nontrivial critical points by   means of Definition \ref{cpd}.
\end{thm}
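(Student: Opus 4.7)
\textbf{Proof plan for Theorem \ref{critical theorem}.} The plan is to adapt the classical Ljusternik--Schnirelman multiplicity scheme to Szulkin's non-smooth framework, where $\mathcal I = \Psi - \Phi$ with $\Psi$ convex lower semicontinuous and $\Phi \in C^1$. Three ingredients are needed: a deformation lemma tailored to functionals of this class, the standard minimax invariance of the levels $c_j$ under odd homeomorphisms, and a genus-theoretic argument that extracts multiplicity when the $c_j$ happen to coincide. For $c \in \mathbb R$ I denote by $\mathcal K_c$ the set of critical points of $\mathcal I$ at level $c$ in the sense of Definition \ref{cpd}; by evenness $\mathcal K_c$ is symmetric, and since $c_j < 0 = \mathcal I(0)$ every $\mathcal K_{c_j}$ excludes the origin.

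The heart of the matter is constructing a pseudo-gradient. At any non-critical $u_0$ the inequality \eqref{cpt} fails, so there exists $v_0 \in \mathcal V$ with $\langle \Phi'(u_0), u_0 - v_0 \rangle + \Psi(v_0) - \Psi(u_0) < -\alpha < 0$; convexity of $\Psi$ then yields $\mathcal I(u_0 + t(v_0 - u_0)) - \mathcal I(u_0) \leq -\alpha t$ for small $t > 0$. By continuity of $\Phi'$ and lower semicontinuity of $\Psi$ the same $v_0$ provides a descent estimate in a neighborhood of $u_0$, and a locally finite partition of unity assembles these local choices into a locally Lipschitz selection $W$ on the open set of non-critical points. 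Since $\Phi$ and $\Psi$ are even I symmetrize $W$ by replacing it with $\tfrac12(W(u) - W(-u))$ so that $W(-u) = -W(u)$. Integrating $-W$ and invoking (PS), one obtains for each non-critical level $c$ and each symmetric neighborhood $N$ of $\mathcal K_c$ an odd homeomorphism $\eta : \mathcal V \to \mathcal V$ and $\epsilon > 0$ such that
\[
\eta\bigl(\{\mathcal I \leq c + \epsilon\} \setminus N\bigr) \subset \{\mathcal I \leq c - \epsilon\}.
\]
This is Szulkin's deformation lemma.

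With the deformation in hand, the remainder is classical. Monotonicity $\Gamma_{j+1} \subset \Gamma_j$ yields $c_1 \leq \cdots \leq c_k < 0$. If some $c_j$ were not a critical value I would pick $A \in \Gamma_j$ with $\sup_A \mathcal I \leq c_j + \epsilon$ and apply $\eta$ (with $N = \emptyset$) to obtain $A' := \eta(A) \in \Gamma_j$ by oddness of $\eta$, yet $\sup_{A'} \mathcal I \leq c_j - \epsilon$, contradicting the definition of $c_j$. If several levels coincide, $c_j = c_{j+1} = \cdots = c_{j+p} = c$, the Rabinowitz genus argument applies: choose a symmetric open $N \supset \mathcal K_c$ with $\gamma(\overline N) = \gamma(\mathcal K_c)$, take $A \in \Gamma_{j+p}$ with $\sup_A \mathcal I \leq c + \epsilon$, and combine $\eta(A \setminus N) \notin \Gamma_{j+p}$ with the subadditivity $\gamma(A \setminus N) \geq \gamma(A) - \gamma(\overline N)$ to deduce $\gamma(\mathcal K_c) \geq p + 1$; since any compact symmetric set of genus $\geq 2$ is infinite, $\mathcal K_c$ contains infinitely many antipodal pairs. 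Collecting contributions across $j = 1, \ldots, k$ produces at least $k$ distinct pairs of nontrivial critical points.

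The main obstacle is the deformation lemma itself: because $\mathcal I$ is only lower semicontinuous and $\Psi$ is non-smooth, the classical differentiable pseudo-gradient is unavailable, and one must work directly with the variational inequality \eqref{cpt} and the convex structure of $\Psi$, taking care that the local descent selections can be patched into a continuous odd field whose flow is globally defined on $\mathcal V$ and whose time-one map preserves $\Gamma_j$. Once this is in place, the minimax and genus steps proceed exactly as in the smooth Ljusternik--Schnirelman theory.
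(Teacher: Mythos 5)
You should first note that the paper offers no proof of Theorem \ref{critical theorem} at all: it is quoted verbatim from Szulkin \cite{MR837231}, so your sketch has to be measured against Szulkin's argument. There the class $\Gamma_{j}$ is deliberately defined as the \emph{closure}, in the Hausdorff metric, of the genus-$\geq j$ compact symmetric sets, and the paper records the completeness of $(\Gamma_{j},\mathrm{dist})$ precisely because Szulkin's proof applies Ekeland's variational principle to the lower semicontinuous map $A\mapsto\sup_{u\in A}\mathcal I(u)$ on this complete metric space; the approximate-criticality inequality \eqref{ps} with the error term $\epsilon_{n}\Vert v-u_{n}\Vert$ is exactly what that argument produces, and (PS) then converts almost-critical points into critical ones, with the genus handling coincident levels. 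Your plan discards this structure: you treat $\Gamma_{j}$ as the ordinary Clark class (never using the closure or the completeness) and hang everything on a deformation lemma instead.

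That deformation lemma is where the genuine gap lies. Your local estimate is a descent along the straight segment from $u_{0}$ toward a \emph{fixed} target $v_{0}$, obtained from convexity of $\Psi$ on that segment; it does not survive the passage to trajectories of $\dot u=-W(u)$, because along a flow line there is neither a chain rule nor any convexity inequality for the merely lower semicontinuous (possibly $+\infty$-valued) $\Psi$, and $\mathcal I$ itself is not continuous, so ``integrating $-W$'' gives no control of $\mathcal I$ whatsoever. Deformations in Szulkin's setting are built as convex-combination maps $u\mapsto(1-s)u+s\,w(u)$, with $w$ assembled by a partition of unity from the local targets, and they yield odd \emph{continuous} maps, not homeomorphisms of $\mathcal V$; your claimed odd homeomorphism $\eta$ with $\eta(\{\mathcal I\leq c+\epsilon\}\setminus N)\subset\{\mathcal I\leq c-\epsilon\}$ is not available in this generality (fortunately only odd continuity is needed for the genus monotonicity you invoke). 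In addition, the step ``$\eta(A)\in\Gamma_{j}$'' needs an argument for the actual closed classes: an element $A$ of $\Gamma_{j}$ is only a Hausdorff limit of genus-$\geq j$ sets and could a priori contain $0$ or have smaller genus, and one must use $\sup_{A}\mathcal I\leq c_{j}+\epsilon<0=\mathcal I(0)$ to exclude $0\in A$ and then recover $\gamma(A)\geq j$ via the neighborhood property of genus. So the skeleton (minimax plus genus) is fine, but the central ingredient is asserted in a form that fails as stated; either prove a genuine segment-based, odd, continuous deformation lemma for this non-smooth class, with the bookkeeping just described, or follow Szulkin's Ekeland route, for which the paper's Hausdorff-metric setup is already in place.
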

Now, to prove Theorem \ref{thm2} we have the following construction of the closed set $K$.
 \begin{equation}\label{cwcK}
 K :=\big \{ u \in \mathcal V\cap L^{\infty}(\mathbb{R}^N) : \| u \| _{L^\infty(\mathbb R^N)} \leq r_2 \big  \},
 \end{equation}
 where $r_2$ is given in Lemma \ref{3.2}.
Before proving Theorem \ref{critical theorem} for our problem, we use the following result.
\begin{lem}\label{selfadjoint}
Assume the potential $V$ satisfies $(V1)-(V2)$. Then the Schr\"odinger operator, $-\Delta+V$, is self-adjoint.
\end{lem}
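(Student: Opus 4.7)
The plan is to realize $H := -\Delta + V$ as the self-adjoint operator canonically associated, via the representation theorem for closed semi-bounded sesquilinear forms (Friedrichs construction), with the form
\[ q(u,v) = \int_{\mathbb{R}^N} \nabla u \cdot \nabla \bar v\,\mathrm{d}x + \int_{\mathbb{R}^N} V(x) u \bar v\,\mathrm{d}x \]
whose natural form domain is $E_V$ regarded as a subspace of $L^2(\mathbb{R}^N)$. First I would check that $(q, E_V)$ is a densely defined, lower semi-bounded sesquilinear form on $L^2(\mathbb{R}^N)$: density is immediate from $C_c^\infty(\mathbb{R}^N) \subset E_V$, and lower semi-boundedness follows at once from assumption $(V1)$, which yields $q(u,u) \geq V_0 \|u\|_{L^2}^2$.

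The key point is closedness of the form. The form-norm $(q(u,u) + \|u\|_{L^2}^2)^{1/2}$ is equivalent to $\|\cdot\|_{E_V}$ because $V \geq V_0 > 0$, so one only has to observe that $E_V$ is complete with respect to $\|\cdot\|_{E_V}$ (which is inherent in its Hilbert-space structure) and that the inclusion $E_V \hookrightarrow L^2(\mathbb{R}^N)$ is continuous (again from $(V1)$, and in fact compact by Lemma \ref{embd}). Together these imply that any form-Cauchy sequence in $E_V$ that converges in $L^2$ must converge in $E_V$ to the same limit, i.e.\ $q$ is closed.

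With $(q, E_V)$ a closed, densely defined, positive sesquilinear form on $L^2(\mathbb{R}^N)$ at hand, I would invoke the representation theorem (see, e.g., Kato, \emph{Perturbation Theory for Linear Operators}, Theorem VI.2.1, or Reed--Simon I, Theorem VIII.15) to produce a unique self-adjoint operator $H$ on $L^2(\mathbb{R}^N)$ with form domain $E_V$, $H \geq V_0 I$, and operator domain
\[ D(H) = \bigl\{ u \in E_V : \exists\, f \in L^2(\mathbb{R}^N)\ \text{such that}\ q(u,\varphi) = \langle f, \varphi \rangle_{L^2}\ \forall \varphi \in E_V \bigr\}, \]
with $Hu := f$. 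Testing against $\varphi \in C_c^\infty(\mathbb{R}^N)$ and integrating by parts then identifies $Hu$ with $-\Delta u + V u$ in the sense of distributions, so $H$ is precisely the Schr\"odinger operator $-\Delta + V$ on its natural domain, and is self-adjoint by construction.

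The main obstacle is therefore purely the verification of the hypotheses of the representation theorem, and in particular the closedness of $q$; once the form is set up correctly everything else is routine abstract operator theory. No subtle property of the potential beyond $(V1)$ is needed for self-adjointness itself, and assumption $(V2)$ plays no direct role at this stage (it is crucial for the compact embedding in Lemma \ref{embd} used elsewhere in the paper).
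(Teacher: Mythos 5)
The paper states this lemma without any proof at all (it is invoked only to conclude, together with the compact embedding, that the spectrum of $-\Delta+V$ is discrete), so there is no ``paper proof'' to match; your argument supplies exactly the standard justification one would expect, and it is correct. The form method is the right tool here: density of the form domain follows from $C_c^\infty(\mathbb R^N)\subset E_V$ (using only that $V$ is continuous, hence locally bounded), semi-boundedness $q(u,u)\ge V_0\|u\|_{L^2}^2$ follows from $(V1)$, and closedness reduces, as you say, to the equivalence of the form norm with $\|\cdot\|_{E_V}$ plus completeness of $E_V$ and continuity of $E_V\hookrightarrow L^2$. The only point you gloss over slightly is the completeness of $(E_V,\|\cdot\|_{E_V})$ itself, which deserves one line (a $\|\cdot\|_{E_V}$-Cauchy sequence is Cauchy in $H^1$ and $\sqrt{V}\,u_n$ is Cauchy in $L^2$; passing to an a.e.\ convergent subsequence identifies the limits), since the paper never proves $E_V$ is a Hilbert space either. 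Your closing remark is also apt and worth keeping: $(V1)$ alone gives self-adjointness via the representation theorem, whereas $(V2)$ is what makes the resolvent compact (through Lemma \ref{embd} and the compact embedding $E_V\hookrightarrow L^2$), and it is that compactness, not self-adjointness per se, which yields the discrete spectrum $0<\mu_1<\mu_2\le\cdots\to\infty$ actually used in the proof of Theorem \ref{thm2}.
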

\textbf{Proof of Theorem \ref{thm2}.}
Let $\Lambda_0$ be as in Lemma \ref{3.2} and $\lambda \in (0, \Lambda_0).$  We first show  that the functional $\mathcal I_K$ has infinitely many distinct critical points by verifying Theorem \ref{critical theorem} in our set-up. It is obvious that the function
$\Phi$ is even and continuously differentiable. Also $\Psi_K$ is a proper, convex and lower semi-continuous even function. We now  verify ${(PS)}$. Let $\{u_n\}$ be a Palais-Smale sequence for $\mathcal I_K$ in $\mathcal V$ such that $\mathcal I_K(u_n)\rightarrow c$  for some $c\in \mathbb{R}$
and
\begin{align} \label{ps00}
\langle D\Phi (u_{n}), u_{n} - v \rangle + \Psi_K (v) -
\Psi_K(u_{n}) \geq -\epsilon_{n} \Vert v - u_{n}\Vert_{\mathcal V},\qquad
\forall v \in V,
\end{align}
where $\epsilon_{n}\rightarrow 0$.
 Since $c\in \mathbb R$, $\{u_n\}$ is bounded in $L^\infty(\mathbb R^N)$ ($u_n$ must belong to $K$ otherwise $\Psi_K(u_n)=\infty$ which contradicts that $c\in \mathbb R$). Moreover, it is easy to conclude that $\{u_n\}$ is bounded in $E_V$. Now using $\mathcal I_K(u_{n})\rightarrow c$, $\| u_n \|_{L^\infty(\mathbb R^N)} \leq r_2$ and the compact embedding   $E_V\hookrightarrow L^q(\mathbb R^N)$, it follows that $\{u_n\}$ is bounded in $L^p(\mathbb R^N)$ as well. Now since $\mathcal V$ is reflexive , there exists $\overline u\in \mathcal V$ such that $u_n\rightharpoonup \overline u$ in $\mathcal V$. Moreover following the previous argument as in Theorem \ref{thm1}, $\overline u\in L^{\infty}(\mathbb R^N)$ with $\|\overline u\|_{L^\infty(\mathbb R^N)}\leq r_2$. Consequently, it implies that $\overline u \in K$. Now we prove that $u_n \rightarrow \overline u$ strongly  in $\mathcal V$.
Following  the similar idea as in proof of Theorem \ref{thm1} combined with Lebesgue dominated convergence theorem and compact embedding of $E_V\to L^1(\mathbb R^N)$, we obtain that  
\begin{eqnarray} \label{late}\Phi(u_{n})\rightarrow \Phi(\overline{u}),\text{  and  } \langle D\Phi (u_{n}), u_{n} - v \rangle \to 0,\quad \forall v \in \mathcal V.\end{eqnarray}  As a result, invoking Brezis-Lieb Lemma, $u_n\rightarrow \overline u$ in $L^p(\mathbb R^N)$. 
 Moreover by \eqref{ps00}
\begin{align*}
\Psi(u_{n}) - \Psi(\overline{u}) - \langle D\Phi (u_{n}), u_{n}
- \overline{u}\rangle &\leq \epsilon_{n} \Vert \overline{u} -
u_{n}\Vert_{\mathcal V},
\end{align*}
and by  the  boundedness of $\{u_{n}-\overline{u} \}$ in $\mathcal V$
we obtain that 
\begin{align}\label{late1}
\limsup_{n\rightarrow +\infty}\big(\Psi(u_{n}) & -
\Psi(\overline{u}) - \langle D\Phi (u_{n}), u_{n} -
\overline{u}\rangle\big) \leq 0.
\end{align}
It now follows from (\ref{late}) and (\ref{late1}) that 
\begin{align}\label{late2}
\limsup_{n\rightarrow +\infty}\Psi(u_{n})\leq \Psi(\overline{u}).
\end{align}
On the other hand by the weak lower semi-continuity of $\Psi$ we have that 
\begin{align}\label{late3}
\liminf_{n\rightarrow +\infty}\Psi(u_{n})\geq \Psi(\overline{u}),
\end{align}
from which together with (\ref{late2}) one has that $u_n \to \overline u$ in $E_V.$ This together with  the fact that  $u_n\rightarrow \overline u$ strongly in $L^p(\mathbb R^N)$ imply that $u_n\rightarrow \overline u$ strongly in $\mathcal V.$

For each  $k\in \mathbb{N},$ considering the definition of $\Gamma_k$ in (\ref{Gam}), we define
\begin{align*}
c_{k} = \inf_{A\in \Gamma_{k}} \sup_{u \in A} \mathcal I(u).
\end{align*}
We shall now prove  that $-\infty < c_{k} < 0$ for all $k \in \mathbb{N}.$ From Lemma \ref{selfadjoint} and the  compactness of the embedding, the spectrum of
the Schr\"odinger operator $-\Delta+V$ on $L^2(\mathbb R^N)$ is discrete and consists of eigenvalues
of finite multiplicity, $0 < \mu_1<\mu_2\leq \mu_3\leq .....,$ and $\mu_k \rightarrow \infty$ as $k\rightarrow \infty$.
%We will
%use $e_1, e_2, e_3,...$ to denote the corresponding eigenfunctions such that $e_1(x)>0$
%for $x\in \mathbb R^N$ and $\|e\|_{E_V} = 1$.
 To this, let us denote by $\mu_{j}$ the j-th eigenvalue of $-\Delta+V(x)$ (counted according to its multiplicity) and by $e_{j}$ a corresponding eigenfunction satisfying
 $$ \displaystyle \int_{\mathbb R^N} \nabla e_{i}.\nabla  e_{j}\,\mathrm{d}x +\int_{\mathbb R^N}V(x) e_{i} e_{j}\,\mathrm{d}x= \delta_{ij}.$$

 As in the proof of Theorem \ref{thm1}, we have that  $\mathcal I_K$ is bounded below. Thus $ c_{k} > -\infty$ for each $k \in \mathbb{N}.$ Let
\begin{align*}
A:= \Big \{u = \alpha_{1}e_{1} + ... + \alpha_{k}e_{k}  : \| u \|_{E_V}^{2} = \displaystyle \sum_{i=1}^k\alpha_{i}^{2}= \rho^{2}\Big \},
\end{align*}
for small $\rho > 0$ to be determined. Then $A \in \Gamma_{k}$ because $\gamma(A) = k$ by Proposition \ref{genus}. Since $A$ is finite dimensional, all norms are equivalent on $A$. Thus, for any $u \in A$, $\|u\|_{L^\infty(\mathbb R^N)}\leq C\|u\|_{E_V}=C\rho\leq r_2$, for sufficiently small $\rho>0$. Hence $A \subseteq K$ for suitable choice of  $\rho$.
Also there exist positive  constants $c_{1}$, $c_{2}$ such that $\| u \|_{L^{p}(\mathbb R^N)} \geq  c_{1} \| u \|_{E_V}$ and $\| u \|_{L^{q}(\mathbb R^N)} \geq  c_{2} \| u \|_{E_V}$ for all $u \in A$. Therefore,
\begin{align*}
 \mathcal I_K(u) &= \frac{1}{2}\| u \|_{E_V}^{2} - \dfrac{1}{p}\| u \|_{L^{p}(\mathbb R^N)}^{p} - \dfrac{\lambda}{q}\| u \|_{L^{q}(\mathbb R^N)}^{q} \\
 & \leq  \dfrac{1}{2} \rho^{2} - \dfrac{1}{p}c_{1}^{p} \rho^{p} - \dfrac{\lambda}{q} c_{2}^{q} \rho^{q} = \rho^{q} (\dfrac{1}{2}\rho^{2 - q} - \dfrac{1}{p}c_{1}^{p} \rho^{p - q} - \dfrac{\lambda}{q} c_{2}^{q}).
\end{align*}
Now we can choose $\rho$ small enough such that $ \mathcal I_K(u) \leq \rho^{q} (\dfrac{1}{2}\rho^{2 - q} - \dfrac{1}{p}c_{1}^{p} \rho^{p - q} - \dfrac{\lambda}{q} c_{2}^{q}) < 0$ for every  $u \in A$. It then follows that $c_{k} < 0$. Thus, by Theorem \ref{critical theorem}, $\mathcal I_K$ has  a sequence of distinct critical points $\{u_k\}_{k \in \mathbb N}$ by means of Definition \ref{cpd}. Also, by Lemma \ref{3.3}, for each critical point $u_k$ of $\mathcal I_K$ there exists $v_k \in K$ such that $-\Delta v_k+V(x)v_k=D \Phi(u_k).$ It now follows from Theorem \ref{12v2} that $\{u_k\}$ is a sequence of distinct solutions of (\ref{P}) such that $\mathcal I_K(u_k)<0$ for each $k \in \mathbb N.$ This completes the proof.
\hfill$\square$

\section{Two dimensional case}
 In order to study \eqref{P} for $N=2$, we adopt the truncation idea as follows. Define
 \[
 K(r)=\big \{u\in E_V\cap L^\infty(\mathbb R^2) :\|u\|_{L^\infty(\mathbb R^2)}\leq r\big \}
 \]
and $g: \mathbb R \to \mathbb R$ by 
\[
g(t)=
\begin{cases}
f(t),& |t|\leq r,\\
\frac{f(r)}{r}t, & |t|\geq r.
\end{cases}
\]
Thus, $g(u)=f(u)$ for $u\in K(r)$.

Therefore, our aim is to find  a solution of the following truncated problem
\begin{equation}\label{P2T}\tag{$T_{\lambda}$}
\left\{\begin{array}{lll}
&-\Delta u+V(x)u=g(u)+\lambda |u|^{q-2}u \;\;x\in \mathbb R^2,\\
&u\in H^1(\mathbb R^2),  \displaystyle \int_{\mathbb R^2}V(x)|u|^2\,\mathrm{d}x<\infty,
\end{array}\right.
\end{equation}
in $K(r)$ for some suitable choice of $r>0$. We shall apply  Theorem \ref{con2} in the following variational set-up. Let  $\mathcal J : E_V\rightarrow \mathbb{R}$ be the Euler-Lagrange functional related to \eqref{P2T}, given as
\begin{equation*}
\mathcal  J(u) = \frac{1}{2} \int_{\mathbb R^2} | \nabla u | ^{2} \,\mathrm{d}x+\frac{1}{2} \int_{\mathbb R^2}V(x)|u|^2\,\mathrm{d}x - \int_{\mathbb R^2}G(u) \,\mathrm{d}x - \frac{\lambda}{q} \int_{\mathbb R^2} | u | ^{q} \,\mathrm{d}x,
\end{equation*}
where $G(t)=\displaystyle \int_0^t g(s)ds$ is the primitive of $g(t)$. Now,
define  the function $\Upsilon : E_V\rightarrow \mathbb{R}$ by
\begin{align*}
\Upsilon (u) = \int_{\mathbb R^2}G(u) \,\mathrm{d}x+ \frac{\lambda}{q}\int_{\mathbb R^2} | u | ^{q} \,\mathrm{d}x.
\end{align*}
Note that $\Upsilon \in C^1(E_V;\mathbb R).$
Finally, let us introduce  the functional $\mathcal J_K: E_V \to (-\infty, +\infty]$ defined by
 \begin{eqnarray}\label{resfun}
 \mathcal J_K(u):= \Psi_K(u)-\Upsilon(u),
 \end{eqnarray}
 where $\Psi_K$ is defined as in \eqref{psitru} for 
 \[
 \Psi(u)= \frac{1}{2} \int_{\mathbb R^2} | \nabla u | ^{2} \,\mathrm{d}x+\frac{1}{2} \int_{\mathbb R^2}V(x)|u|^2\,\mathrm{d}x.
 \]  
In the process to verify Theorem \ref{con2}, we need similar Lemmas as in the section 2. Note that the assumption $(f2)$ implies that
 for every $\epsilon_0>0,$ there exists $\delta_0>0$ such that $|f(t)|\leq \epsilon_0 |t|^{\nu},$ whenever $|t|<\delta_0.$ In particular, for fixed $\epsilon_0=1$
 there exists $\delta_1>0$ such that 
 \begin{equation}\label{ftwo}
   |f(t)|\leq |t|^{\nu},\; \textrm{whenever}\;|t|<\delta_1. \end{equation}
We will fix this $\delta_1$ to avoid any confusion.
\begin{lem} \label{3.2T}
Assume that $1 < q < 2 $ and $\delta_1>0$ as defined in \eqref{ftwo}. Then for all $u\in K(r)$ with $0<r<\delta_1$, we have
\begin{align*}
\| D\Upsilon(u) \|_{L^\infty(\mathbb R^2)} \leq   r^{\nu} + \lambda r^{q -1}.
\end{align*}
\end{lem}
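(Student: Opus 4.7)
The plan is to compute $D\Upsilon(u)$ explicitly and bound it pointwise using the $L^\infty$ constraint that defines $K(r)$. Since $\Upsilon(u) = \int_{\mathbb R^2} G(u)\,\mathrm{d}x + \frac{\lambda}{q}\int_{\mathbb R^2}|u|^q\,\mathrm{d}x$ with $G' = g$, a direct computation of the G\^ateaux derivative gives $D\Upsilon(u) = g(u) + \lambda u|u|^{q-2}$, identified in the usual way with its representing function in $L^1_{\mathrm{loc}}(\mathbb R^2)$. The triangle inequality then yields
$$\|D\Upsilon(u)\|_{L^\infty(\mathbb R^2)} \leq \|g(u)\|_{L^\infty(\mathbb R^2)} + \lambda\,\big\|\,|u|^{q-1}\big\|_{L^\infty(\mathbb R^2)},$$
so the proof reduces to estimating each summand.

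For the second term, since $u \in K(r)$ forces $|u(x)| \leq r$ a.e.\ and $q - 1 > 0$, we immediately obtain $\big\|\,|u|^{q-1}\big\|_{L^\infty(\mathbb R^2)} \leq r^{q - 1}$. For the first term, the key observation is that the assumption $r < \delta_1$ ensures the truncation built into $g$ is never active on the range of $u$: for a.e.\ $x \in \mathbb R^2$ we have $|u(x)| \leq r < \delta_1$, hence $g(u(x)) = f(u(x))$ by the definition of $g$, and the near-origin control \eqref{ftwo} from $(f2)$ applies to give $|f(u(x))| \leq |u(x)|^\nu \leq r^\nu$. Combining the two bounds produces the stated estimate $\|D\Upsilon(u)\|_{L^\infty(\mathbb R^2)} \leq r^\nu + \lambda r^{q-1}$. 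The argument has no substantive obstacle; it is the two-dimensional analogue of Lemma \ref{3.2}, with the pointwise superlinear control of $f$ near zero coming from \eqref{ftwo} playing the role that the pure power $t^{p-1}$ plays in the higher-dimensional case. The only point to be careful about is recording that the choice $r < \delta_1$ is precisely what allows one to bypass the truncated branch of $g$ and invoke $(f2)$ directly.
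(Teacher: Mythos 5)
Your proof is correct and follows essentially the same route as the paper: apply the triangle inequality to $D\Upsilon(u)=g(u)+\lambda u|u|^{q-2}$, bound the concave term by $r^{q-1}$ from $\|u\|_{L^\infty(\mathbb R^2)}\le r$, and bound $g(u)$ by $r^\nu$ via \eqref{ftwo} since $|u|\le r<\delta_1$. You merely make explicit a point the paper leaves implicit, namely that on $K(r)$ the truncation in $g$ is inactive so $g(u)=f(u)$; this is a welcome clarification but not a different argument.
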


\textbf{Proof.}
By definition of $D\Upsilon (u)$ we have
\begin{align*}% \label{6m}
\| D\Upsilon (u) \|_{L^\infty(\mathbb R^2)} &= \big \| g(u) +\lambda u | u | ^{q - 2} \big \|_{L^\infty(\mathbb R^2)} \leq \big \|g(u)\big \|_{L^\infty(\mathbb R^2)} + \lambda \big \|| u | ^{q - 1} \big \|_{L^\infty(\mathbb R^2)}.
\end{align*}
Now, using \eqref{ftwo} and  choosing $r\leq \delta_1$, we get 
\begin{align*}
\| D\Upsilon (u) \|_{L^\infty(\mathbb R^2)}&\leq \|u\|^{\nu}_{L^\infty(\mathbb R^2)}+\lambda \| u \|_{L^\infty(\mathbb R^2)}^{q -1}\leq r^{\nu}+\lambda r^{q-1}
\end{align*}
as desired.
\hfill$\square$\\
We are now in the position to state the following result addressing condition $(ii)$ in Theorem \ref{con2} by following the similar idea as in Lemma \ref{3.3} combined with Lemma \ref{3.2}.
\begin{lem} \label{3.3N2}
Let  $1 < q < 2 < p$. Choose $\Lambda_1> 0$ in such a way that  for each   $\lambda \in (0, \Lambda_1)$ there exist positive numbers $r_1, r_2$ with $r_1<r_2 < \delta_1$ such that $r \in [r_1, r_2]$ if and only if $r^{\nu}+\lambda r^{q-1} \leq V_0r.$ Then   for each $r \in  [r_{1}, r_{2}]$  and   each $\overline{u}\in K(r)$ there exists $v \in K(r)$ such that
 \[-\Delta v+V(x)v=g(\overline u) +\lambda \overline{u} | \overline{u} | ^{q - 2} .\]
\end{lem}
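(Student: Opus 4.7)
The strategy is to mirror the argument of Lemma \ref{3.3}, with Lemma \ref{3.2T} playing the role of Lemma \ref{3.2} and Lemma \ref{req1} replaced by the analogous elementary calculus analysis hidden in the statement's ``Choose $\Lambda_1>0$...''. First, I would verify that such a $\Lambda_1$ exists. Consider $\phi(r):=V_0r-r^{\nu}-\lambda r^{q-1}$; since $\nu>1>q-1>0$ and $q<2$, one has $\phi(r)/r=V_0-r^{\nu-1}-\lambda r^{q-2}$ which tends to $-\infty$ as $r\to 0^+$ and as $r\to+\infty$, while it attains a positive maximum for all sufficiently small $\lambda>0$. A standard intermediate-value argument then produces $0<r_1<r_2$ with $\phi(r)\ge 0$ exactly on $[r_1,r_2]$; by shrinking $\Lambda_1$ further one ensures $r_2<\delta_1$, as needed.

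Next, fix $r\in[r_1,r_2]$ and $\overline u\in K(r)$, and set
\[
h(x):=g(\overline u(x))+\lambda\overline u(x)|\overline u(x)|^{q-2}.
\]
Because $\|\overline u\|_{L^\infty(\mathbb R^2)}\le r<\delta_1$, we have $g(\overline u)=f(\overline u)$ and by \eqref{ftwo} together with Lemma \ref{3.2T},
\[
\|h\|_{L^\infty(\mathbb R^2)}\le r^{\nu}+\lambda r^{q-1}.
\]
To produce a weak solution of $-\Delta v+V(x)v=h$ in $E_V$, I would minimize the quadratic functional
\[
Q(u):=\tfrac12\!\int_{\mathbb R^2}|\nabla u|^2\,\mathrm{d}x+\tfrac12\!\int_{\mathbb R^2}V(x)|u|^2\,\mathrm{d}x-\int_{\mathbb R^2}h(x)u\,\mathrm{d}x
\]
over $E_V$. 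Since $h\in L^\infty(\mathbb R^2)$ and the embedding $E_V\hookrightarrow L^1(\mathbb R^2)$ from Lemma \ref{embd} is compact (valid for $N=2$), the linear term is weakly continuous on $E_V$; combined with weak lower semicontinuity and coercivity of the quadratic part, the direct method yields a minimizer $v\in E_V$ whose Euler--Lagrange equation is precisely $-\Delta v+V(x)v=h$.

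Finally, I apply Lemma \ref{UEST} to conclude $V_0\|v\|_{L^\infty(\mathbb R^2)}\le\|h\|_{L^\infty(\mathbb R^2)}\le r^{\nu}+\lambda r^{q-1}$, and by the defining property of $[r_1,r_2]$ the right-hand side is bounded by $V_0 r$; thus $\|v\|_{L^\infty(\mathbb R^2)}\le r$, i.e.\ $v\in K(r)$. The only nontrivial steps are (a) verifying existence of the interval $[r_1,r_2]\subset(0,\delta_1)$ for small enough $\lambda$, which is an elementary one-variable analysis of $\phi$, and (b) ensuring the minimization of $Q$ goes through in $E_V$; the latter reduces to the compact embedding $E_V\hookrightarrow L^1(\mathbb R^2)$, which is the key ingredient provided by the potential assumptions $(V1)$--$(V2)$. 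I expect (a) to be essentially trivial once phrased correctly, while (b) is identical in spirit to the corresponding step in Lemma \ref{3.3}, so no new obstacle arises beyond replacing the bound $r^{p-1}+\lambda r^{q-1}$ by $r^{\nu}+\lambda r^{q-1}$.
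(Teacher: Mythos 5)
Your argument for the main assertion is correct and is exactly the paper's (unwritten) proof: it repeats Lemma \ref{3.3}, replacing the bound of Lemma \ref{3.2} by that of Lemma \ref{3.2T}, minimizing $Q$ on $E_V$ via the compact embedding of Lemma \ref{embd}, and invoking Lemma \ref{UEST} to get $V_0\|v\|_{L^\infty(\mathbb R^2)}\le r^{\nu}+\lambda r^{q-1}\le V_0 r$. One small caveat in your aside on choosing $\Lambda_1$: decreasing $\lambda$ \emph{enlarges} the set where $r^{\nu}+\lambda r^{q-1}\le V_0 r$, so its right endpoint increases toward $V_0^{1/(\nu-1)}$ rather than falling below $\delta_1$; the correct fix is to fix $r_2<\min\{\delta_1,V_0^{1/(\nu-1)}\}$ in advance and then take $\lambda$ small (only the ``if'' direction of the paper's condition is ever used in the proof).
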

\textbf{Proof of Theorem \ref{thm3}.}
Let $\Lambda_1$ be as in Lemma \ref{3.3N2} and $\lambda \in (0, \Lambda_1).$ Also, let  $r_1$ and $r_2$ be as in Lemma \ref{3.3N2} and define
\[K:=\big \{u \in K(r_2); \, \, \, u(x) \geq 0 \text{ a.e. } x \in \mathbb R^2\big \}.\]
Now we continue the proof  in the following few steps.\\
{\it Step 1.}  We show that there exists  $\overline{u} \in K$ such that $\mathcal  J_K(\overline{u}) = \inf _{u \in E_V}\mathcal J_K(u)$. Then by Proposition \ref{minimum}, we conclude that  $\overline{u}$  is a critical point of $\mathcal J_K.$\\
Set $ \sigma:= \inf _{u \in E_V}\mathcal J_K(u)$. So by definition of $\Psi_K$ for every $u \notin K$, we have $\mathcal J_K(u) = +\infty$ and therefore $\sigma = \inf _{u \in K}\mathcal J_K(u)$. For each  $u \in K,$  it follows from the compact embedding  $E_V\hookrightarrow L^1(\mathbb R^2)$ as in Lemma \ref{embd} that 
\begin{align*}
 \int_{\mathbb R^2} G(u) \,\mathrm{d}x=\int_{|u|\leq r} F(u) \,\mathrm{d}x\leq C\int_{|u|\leq r}|u|^{\nu+1}\,\mathrm{d}x\leq C\|u\|^\nu_{L^\infty(\mathbb R^2)}\int_{|u|\leq r}|u|\,\mathrm{d}x\leq  C_1\|u\|_{E_V}
\end{align*}
Therefore,
\begin{align*}
\Upsilon(u)&\leq  C_1 \| u\|_{E_{V}}+ C_2\|u\|^{q}_{E_V}.
\end{align*}
Here we have used the embedding $E_{V} \hookrightarrow L^q(\mathbb R^2)$ due to Lemma \ref{embd}.  Thus, for $u \in K$ we have that 
\begin{eqnarray}\label{coerN2}
  \mathcal J_K(u):= \Psi_K(u)-\Upsilon(u) \geq \frac{1}{2}\| u\|^2_{E_{V}} - C_1 \| u\|_{E_{V}}- C_2\|u\|^{q}_{E_V} , 
\end{eqnarray}
{from which we obtain that  $\sigma> -\infty$ .}
Now, suppose that $\{u_{n}\}$ is a minimizing sequence in $K$ such that $\mathcal J_K(u_{n})\rightarrow \sigma$. It follows from (\ref{coerN2}) and the definition of the set $K$ that  the sequence $\{u_{n}\}$ is bounded in $E_{V}\cap L^\infty(\mathbb R^2)$. Now, using standard results in Sobolev spaces, after passing to a subsequence if necessary, there exists $\overline{u} \in E_V$ such that $u_n \rightharpoonup \overline{u} $ weakly in $E_{V}$. Moreover $u_n(x)\rightarrow \overline u(x)$ in $\mathbb R^2$ pointwise almost everywhere which implies $\overline u\in L^\infty(\mathbb R^2)$ with $\|\overline u\|_{L^\infty(\mathbb R^2)}\leq r_2$. As a consequence  $\overline u\in K$.  We now show that $\Upsilon (u_n) \to \Upsilon(\overline{u}).$  Indeed, using $u_n\in K$, we have that 
\[G(u_n)=\int_{0}^{u_n}g(t)dt \leq C| u_n |^{\nu+1}  .\]
Therefore, by the strong convergence $u_n \to \overline u$ in $L^\beta(\mathbb R^2)$ for all $\beta \in [1, \infty)$  as in Lemma \ref{embd} and the dominated convergence theorem we obtain that $\Upsilon(u_n) \to \Upsilon(\overline{u}).$

Therefore, $\mathcal J_K(\overline{u})\leq \liminf_{n \to \infty }\mathcal J_K(u_{n})$. Thus, $\mathcal J_K(\overline{u}) = \sigma =\inf _{u \in E_V}\mathcal J_K(u)$ and the proof of {\it Step 1} is complete.\\
{\it Step 2.}
 In this step we show that there exists $v\in K$ such that  $$-\Delta v +V(x)v=g( \overline{u} ) +\lambda \overline{u} | \overline{u} | ^{q - 2} .$$ By  Lemma \ref{3.3N2} together with the fact that $\overline{u} \in K(r_2)$ we obtain that $v\in K(r_2).$ Also, by the maximum principle and the fact  that $$-\Delta v+V(x)v=g(u) +\lambda \overline{u} | \overline{u} | ^{q - 2} \geq 0,$$
 we obtain that $v\geq 0.$ 
It now follows from Theorem \ref{12v2} together with
{\it Step 1} and
{\it Step 2} that $\overline{u}$ is a solution of the (\ref{P2T}).  To complete the proof we shall show that $\overline{u}$ is non-trivial by proving that
 $\mathcal J_{K}(\overline{u}) = \inf_{u \in K} \mathcal J_{K}(u) < 0$.
Take $0\leq e \in K$. For $t\in [0,1],$ we have that  $te \in K$ and therefore
\begin{align*}
\mathcal J_{K}(te) & = \frac{1}{2} \int_{\mathbb R^2} | \nabla te | ^{2} \,\mathrm{d}x+\frac{1}{2} \int_{\mathbb R^2} V(x)|te | ^{2} \,\mathrm{d}x - \int_{\mathbb R^2} G(te) \,\mathrm{d}x - \frac{\lambda}{q} \int_{\mathbb R^2} | te | ^{q} \,\mathrm{d}x
\\
& \leq t^{q}\left( \frac{ t^{2 - q}}{2} \int_{\mathbb R^2} | \nabla e | ^{2} \,\mathrm{d}x +\frac{t^{2-q}}{2} \int_{\mathbb R^2} V(x)|e | ^{2} \,\mathrm{d}x - \frac{\lambda}{q} \int_{\mathbb R^N} | e | ^{q} \,\mathrm{d}x\right).
\end{align*}
 Since $1 < q < 2 $,  $\mathcal J_{K}(te)$ is negative for $t$ sufficiently small. Thus,  we can conclude that $\mathcal J_{K}(\overline{u}) < 0$. Thus, $\overline{u}$ is a non-trivial and non-negative solution of (\ref{P2T}). Moreover,  it follows from the strong maximum principle that $\overline u>0$. Finally, using the fact that $\overline u\in K$ implies $\overline u$ is a positive solution of \eqref{P}.
\hfill$\square$\\

We would like to remark that by using the same argument as in the proof of Theorem \ref{thm2}, one can also prove multiplicity for the case $N=2$.
\section*{Acknowledgements}
J. M. do \'O and P. K. Mishra  are supported in part by INCTmat/MCT/Brazil, CNPq and CAPES/Brazil. A. Moameni is pleased to acknowledge the support of the National Sciences and Engineering Research Council of Canada.

%%%%%%%%%%%%%%%%%%%%%%%%%%%%%%%%%%%%%%%%%%%%%%%%%%%%%%%%%%%%%%%%%%%%%%%%%%%%%%%%%%%%%%%%%%%%%%%%%%%%%%%%%%%%%%%%%%%%%%%%%%%%%%%%%%%%%%%%%%%%%%%%%%%%%%%%%%%%%%%%
%%
%%															 	BIBLIOGRAPHY
%%
%%%%%%%%%%%%%%%%%%%%%%%%%%%%%%%%%%%%%%%%%%%%%%%%%%%%%%%%%%%%%%%%%%%%%%%%%%%%%%%%%%%%%%%%%%%%%%%%%%%%%%%%%%%%%%%%%%%%%%%%%%%%%%%%%%%%%%%%%%%%%%%%%%%%%%%%%%%%%%%%
\section*{References}

\end{document}